\definecolor{denim}{rgb}{0.08, 0.38, 0.74}
\definecolor{denim}{rgb}{0.08, 0.38, 0.74}
\definecolor{BrilliantRose}{rgb}{1.0, 0.33, 0.64}
\definecolor{amethyst}{rgb}{0.6, 0.4, 0.8}
\newcommand{\cref}[1]{\autoref{#1}}
\newcommand{\rred}[1]{#1}
\newcommand{\revone}[1]{}
\newcommand{\revtwo}[1]{}
\newcommand{\revthree}[1]{}
\newcommand{\stra}[1]{%
  {\color{red}\sout{\textcolor{black}{#1}}}%
}
\newcommand{\change}[2]{#2}
\newcommand{\sed}[1]{}% 
\newcommand{\quest}[1]{}% 
\newcommand{\ans}[1]{}%
\newcommand{\sw}[1]{}% 
\newcommand{\remove}[1]{}
\newcounter{func}
\newcommand{\funref}[1]{\hyperref[#1]{f_{\ref*{#1}}}} % print a
\tikzset{black node/.style={draw, circle, fill = black, minimum size = 5pt, inner sep = 0pt}}
\tikzset{white node/.style={draw, circlternary_treese, fill = white, minimum size = 5pt, inner sep = 0pt}}
\tikzset{normal/.style = {draw=none, fill = none}}
\tikzset{lean/.style = {draw=none, rectangle, fill = none, minimum size = 0pt, inner sep = 0pt}}
\tikzset{diam/.style={draw, diamond, fill = black, minimum size = 7pt, inner sep = 0pt}}
\newcommand{\Bcal}{\mathcal{B}}
\newcommand{\Ocal}{\mathcal{O}}
\newcommand{\Pcal}{\mathcal{P}}
\newcommand{\Nbbb}{\mathbb{N}}
\newcommand{\Sbbb}{\mathbb{S}}
\newcommand{\eqdef}{\stackrel{{\scriptsize\rm def}}{=}}
\definecolor{DarkTangerine}{rgb}{1.0, 0.66, 0.07}
\definecolor{darkyellow}{rgb}{.7, .6, 0.0}
\definecolor{CornflowerBlue}{rgb}{0.39, 0.58, 0.93}
\definecolor{DarkGoldenrod}{rgb}{0.72, 0.53, 0.04}
\definecolor{BritishRacingGreen}{rgb}{0.0, 0.26, 0.15}
\definecolor{AO}{rgb}{0.0, 0.5, 0.0}
\definecolor{MidnightBlack}{rgb}{0.1,0.1,.34}
\definecolor{MidnightBlue}{rgb}{0.1,0.1,0.43}
\definecolor{Black}{rgb}{0,0, 0}
\definecolor{Blue}{rgb}{0, 0 ,1}
\definecolor{Red}{rgb}{1, 0 ,0}
\definecolor{White}{rgb}{1, 1, 1}
\definecolor{DeepMagenta}{rgb}{0.8, 0.0, 0.8}
\definecolor{grey}{rgb}{.6, .6, .6}
\definecolor{darkgrey}{rgb}{.33, .33, .33}
\definecolor{Mygreen}{rgb}{.0, .7, .0}
\definecolor{Yellow}{rgb}{.55,.55,0}
\definecolor{Mustard}{rgb}{1.0, 0.86, 0.35}
\definecolor{applegreen}{rgb}{0.55, 0.71, 0.0}
\definecolor{darkturquoise}{rgb}{0.0, 0.81, 0.82}
\definecolor{celestialblue}{rgb}{0.29, 0.59, 0.82}
\definecolor{green_yellow}{rgb}{0.68, 1.0, 0.18}
\definecolor{crimsonglory}{rgb}{0.75, 0.0, 0.2}
\definecolor{darkmagenta}{rgb}{0.30, 0.0, 0.30}
\definecolor{magenta}{rgb}{0.50, 0.0, 0.50}
\definecolor{internationalorange}{rgb}{1.0, 0.31, 0.0}
\definecolor{darkorange}{rgb}{1.0, 0.55, 0.0}
\definecolor{ao}{rgb}{0.0, 0.5, 0.0}
\definecolor{awesome}{rgb}{1.0, 0.13, 0.32}
\definecolor{darkcyan}{rgb}{0.0, 0.50, 0.50}
\definecolor{violet}{rgb}{0.93, 0.51, 0.93}
\definecolor{brown}{rgb}{0.65, 0.16, 0.16}
\definecolor{orange}{rgb}{1.0, 0.65, 0.0}
\definecolor{DarkGreen}{rgb}{0,.5,0}
\definecolor{BostonUniversityRed}{rgb}{0.8, 0.0, 0.0}
\definecolor{BrightLavender}{rgb}{0.75, 0.58, 0.89}
\definecolor{DarkLavender}{rgb}{0.45, 0.31, 0.59}
\definecolor{ChromeYellow}{rgb}{1.0, 0.65, 0.0}
\definecolor{DarkGreen}{rgb}{0.547,.5,0}
\newcommand{\midnightblack}[1]{{\color{MidnightBlack}#1}}
\definecolor{Red}{rgb}{1, 0 ,0}
\definecolor{Blue}{rgb}{0, 0 ,1}
\newtheorem{theorem}{Theorem}[section]
\newaliascnt{question}{theorem}
\newaliascnt{lemma}{theorem}
\newtheorem{lemma}[lemma]{Lemma}
\newaliascnt{claim}{theorem}
\newtheorem{claim}[claim]{Claim}
\newaliascnt{invariant}{theorem}
\newaliascnt{proposition}{theorem}
\newtheorem{proposition}[proposition]{Proposition}
\newaliascnt{observation}{theorem}
\newtheorem{observation}[observation]{Observation}
\newaliascnt{corollary}{theorem}
\newtheorem{corollary}[corollary]{Corollary}
\newaliascnt{definition}{theorem}
\newtheorem{definition}[definition]{Definition}
\newaliascnt{conjecture}{theorem}
\newtheorem{conjecture}[conjecture]{Conjecture}
\newaliascnt{remark}{theorem}
\newtheorem{remark}[remark]{Remark}
\newaliascnt{counterexample}{theorem}
\newcommand{\hh}{\end{document}}
\newcommand{\manman}[1]{\textsf{#1}}
\newcommand{\tw}{{\sf tw}\xspace}%treewdith
\newcommand{\cupall}{\pmb{\pmb{\bigcup}}}
\newcommand{\poly}{\text{$\mathsf{poly}$}\xspace}
\newcommand{\bw}{{\sf bw}\xspace} 
\title{Approximating branchwidth on parametric extensions\\ of planarity\thanks{Emails of authors: 
  \manman{sedthilk@thilikos.info}, \manman{wiederrecht@kaist.ac.kr}}}
\author{\bigskip\large Dimitrios M. Thilikos\thanks{LIRMM, Univ Montpellier, CNRS, Montpellier, France.}~$^{,}$\thanks{Supported by the French-German Collaboration ANR/DFG Project UTMA (ANR-20-CE92-0027), the ANR project GODASse ANR-24-CE48-4377,  and by the Franco-Norwegian project PHC AURORA 2024 (Projet n°\! 51260WL).}\and\large 
 \and\large 
 Sebastian Wiederrecht\thanks{School of Computing, KAIST, Daejeon, South Korea.}}
\date{}
\begin{document}

\maketitle

\begin{abstract}
\noindent The \textsl{branchwidth} of a graph has been introduced by Roberson and Seymour as a measure of the tree-decomposability of a graph, alternative to treewidth.
Branchwidth is polynomially computable on planar graphs by the celebrated ``Ratcatcher'' algorithm of Seymour and Thomas. We explore how this algorithm can be extended to minor-closed graph classes beyond planar graphs, as follows:
Let $H_{1}$ be a graph embeddable in the torus and $H_{2}$ be a graph embeddable in the projective plane.
We prove that every $\{H_{1},H_{2}\}$-minor free graph $G$ contains a subgraph $G'$ whose branchwidth differs from that of $G$ by a constant depending only on $H_1$ and $H_2$. Moreover, the graph $G'$ admits a tree decomposition 
where all torsos are planar.
This decomposition allows for a constant-additive approximation of branchwidth:
For $\{H_{1},H_{2}\}$-minor free graphs, there is a constant $c$ (depending on $H_{1}$ and $H_{2}$) and an $\Ocal(|V(G)|^{3})$-time algorithm that, given a graph $G$, outputs a value $b$ such that the branchwidth of $G$ is between $b$ and $b+c$.
\end{abstract}

\noindent\textbf{Keywords:} Branchwidth, Tree decomposition, Tangle, Slope, Approximation algorithm.
\section{Introduction}

Graph parameters are important in graph algorithm design as they abstract away certain structural characteristics of graphs that  appear to be useful in  certain algorithmic applications.
Perhaps the most eminent among all known graph parameters is \textsl{treewidth} as it serves as a measure of the tree-decomposability of a graph. 
Graphs with small treewidth are those admitting a \textsl{tree decomposition} of small width, i.e.,  a tree-like arrangement into bounded size pieces (see \cref{def_treewidth}).
To have such a decomposition at hand is important as, for many problems on graphs, it enables the design of dynamic programming algorithms \cite{Courcelle90them}. 

While a considerable amount of research has been concentrated to treewidth, one may not neglect the attention that has been given to alternative tree decomposability measures.
The most famous ``cousin'' of  treewidth is \textsl{branchwidth}, defined in terms of \emph{branch decompositions}, that provide an alternative way to see graphs as tree-like structures. 
Branchwidth was introduced by Robertson \rred{and} Seymour (and was adopted henceforth) in the tenth volume of their Graph Minors series~\cite{robertson1991graph}.
One of the reasons for showing preference to branchwidth is that it enjoys combinatorial properties that makes it easier to handle and generalize than treewidth.
Fundamental attempts to generalize the theory of graph minors to matroids used branch decompositions~\cite{GeelenGW02branc,GeelenW02branc}.
Also, several algorithmic and meta-algorithmic results related to dynamic programming procedures in matroids and hypergraphs have been stated in terms of branch decompositions~\cite{HlinenyO07findi,Hlineny06branc,Hlineny05apara,JeongKO21findi}. 

For more practical purposes, branchwidth appeared to be useful for particular implementations.
Cook and Seymour  used branch decompositions to solve the ring routing problem, related to the design of reliable cost effective SONET networks and for solving TSP~\cite{CookS93}.
Alekhnovich \rred{and}%
\revthree{1. page 2, line 10: why not ``and''  instead of ``\&'' ?} 
\ans{Fixed.}Razborov~\cite{AlekhnovichR11satis} used
branchwidth of hypergraphs to design algorithms for SAT.
Finally, Dorn and Telle introduced decompositions that 
combined algorithm design on tree and branch decompositions to a  dynamic programming scheme~\cite{DornT09semin}.

\vspace{-0mm}\subsection{Algorithms for branchwidth}

As observed by Robertson and Seymour in~\cite{robertson1991graph}, treewidth and branchwidth 
are linearly equivalent, in the sense that the one is a constant factor approximation of the other.
They are both \textsf{NP}-complete to compute~\cite{SeymourT94callr,ArnborgCP93compl} and they can both be computed by algorithms that are fixed parameter tractable (\textsf{FPT}), when parameterized by their value~\cite{BodlaenderK96effic,Bodlaender96aline,BodlaenderT97const}.
Interestingly, the two parameters may behave quite differently when it comes to their computation in special graph classes.
For instance, Kloks, Kratochvíl, and Müller proved in~\cite{KloksKM05compu} that computing branchwidth is {\sf NP}-complete for split graphs where treewidth is polynomially computable (see also~\cite{PaulT09branc}).

\paragraph{Computing branchwidth of planar graphs.}
Perhaps the most important algorithmic result on branchwidth is the one of Seymour and Thomas~\cite{SeymourT94callr}, stating that it can be computed in time $\Ocal(|V(G)|^3)$ on planar graphs.
This is due to the celebrated Ratcatcher algorithm in~\cite{SeymourT94callr} that is actually working and explained in the more general context of planar hypergraphs.
In~\cite{GuT08optim}, Gu \change{\&}{and} Tamaki
\revthree{2. page 2, line 32: Gu is also an author of [24]} 
\ans{Fixed.}
gave a time $\Ocal(|V(G)|^3)$ algorithm that outputs an optimal branch decomposition of a planar graph.
Later Gu and Tamaki gave constant-factor approximations of branch decompositions of planar graphs, running in time $\Ocal(n^{1+\epsilon}\cdot \log n)$~\cite{GuTT11const}.
A first step towards extending the applicability of the Ratcatcher algorithm further than planar graphs was done by 
Demaine,  Hajiaghayi,  Nishimura, Ragde, and Thilikos in~\cite{DemaineHNRT04appro}.
According to the results in~\cite{DemaineHNRT04appro}, if $H$ is a singly-crossing minor%
\revthree{3. page 2, line 40: what does it mean ``single-crossing minor'' ?}%
\ans{A definition is given in footnote number 1.}
\footnote{A graph $H$ is a \emph{minor} of a graph $G$ if $H$ can be obtained from some subgraph of $G$ by contracting edges.
A graph is \emph{singly-crossing} if it can be embedded in the plane with at most one crossing. Moreover, a graph is a \emph{singly-crossing minor} if it is a minor of some singly-crossing graph.}, then branchwidth can be computed in time $\Ocal(|V(G)|^3)$ on $H$-minor free graphs.\footnote{While~\cite{DemaineHNRT04appro} claims a constant-factor approximations for both treewidth and branchwidth, an exact algorithm on singly-crossing minor-free graphs for branchwidth appears implicitly in the proofs of~\cite{DemaineHNRT04appro}.}
\rred{Single-crossing minor free graphs are a natural way to extend planarity, as $K_{3,3}$ and $K_{5}$ are already singly-crossing
and, as observed by in \cite{RobertsonS93Excluding}, they can be constructed by gluing 
together planar graphs and bounded size graphs via $k$-clique sums, for $k≤3$.}
To our knowledge, no polynomial algorithm for branchwidth is known for minor-closed graph classes \rred{that can be seen as structural extensions of planarity beyond}%
\revthree{4.  page 2, line 40 (also in the abstract): it is not very clear to me why we go from planar graphs to graphs excluding something slightly planar as a minor: maybe this can be somehow better motivated.}%
\ans{We added some text for this.} 
singly-crossing minor-free graphs.

\subparagraph{Approximation algorithms of  branchwidth.}
As branchwidth and treewidth are linearly equivalent, every approximation algorithm for the one is also an approximation algorithm for the other. 
As a first step, Bodlaender,  Gilbert,  Hafsteinsson, and Kloks proved in~\cite{BodlaenderGHK95appro} that  both parameters admit a polynomial time $\Ocal(\log n)$-approximation algorithm.
Moreover, according to Feige, Hajiaghayi, and Lee, both treewidth and branchwidth admit a (randomized) polynomial time $\Ocal(\sqrt{\log n})$-approximation algorithm~\cite{FeigeHL08impro}.
Also, this can be improved to a $\Ocal(h^2)$-approximation for graphs excluding some graph on $h$ vertices as a minor~\cite{FeigeHL08impro}.
This last result implies that branchwidth admits a polynomial time constant-factor approximation in every minor-closed graph class.
The emerging question is:

\vspace{-1mm}
\begin{quotation}
\noindent \textsl{Are there minor-closed graph classes where a ``better than constant-factor'' approximation can be achieved?}
\end{quotation}
\vspace{-1mm}
This question is the main algorithmic motivation of this paper. 

\vspace{-0mm}\subsection{Approximating branchwidth}

Let $\mathcal{E}_{1},$ and $\mathcal{E}_{2}$ be  the classes of graphs that are embeddable in the torus and the projective plane   respectively. 
Our main result is that whenever we exclude some graph in $\mathcal{E}_{1}$ and some graph in $\mathcal{E}_{2}$ as minors, a \textsl{constant-additive} approximation of branchwidth is possible.
This extends the result of \cite{DemaineHNRT04appro} on singly-crossing minor free graphs.

\begin{theorem}\label{arst_cpe}
Branchwidth admits a polynomial-time additive approximation algorithm for every graph class that excludes as a minor a graph    $H_{1}\in \mathcal{E}_{1}$ and a graph $H_{2}\in \mathcal{E}_{2}$. 
In particular, there is a function $f_{\ref{arst_cpe}}\colon\mathbb{N}\to\mathbb{N}$ and an algorithm that, given as input two graphs  $H_{1}\in \mathcal{E}_{1}$ and $H_{2}\in \mathcal{E}_{2},$ and an $\{H_{1},H_{2}\}$-minor free graph $G,$ outputs a value $b$ where $b\leq \bw(G)\leq  b+f_{\ref{arst_cpe}}(k),$ where $k=|V(H_{1})|+|V(H_{2})|$. Moreover, $f_{\ref{arst_cpe}}(k)=2^{\poly(k)}$ and the algorithm runs  in time $\Ocal(2^{2^{\poly(k)}}|V(G)|^{3})$.
\end{theorem}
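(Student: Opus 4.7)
The plan is to reduce branchwidth computation on an $\{H_1,H_2\}$-minor free graph $G$ to a collection of branchwidth computations on planar graphs, to which the $\Ocal(n^3)$ Ratcatcher of Seymour and Thomas applies. The reduction uses the two guarantees announced in the abstract. \textbf{First}, I would prove the structural result: every $\{H_1,H_2\}$-minor free graph contains a subgraph $G'$ with $\bw(G)-c \leq \bw(G') \leq \bw(G)$ for some $c=c(k)$, such that $G'$ admits a tree decomposition all of whose torsos are planar. \textbf{Second}, I would run the Ratcatcher on each torso and combine the outputs through the tree structure to recover, up to a further additive constant, the branchwidth of $G'$, and hence of $G$.

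The combination step in the second part would lean on the tangle-theoretic view of branchwidth. If the tree decomposition has adhesion $a=\Ocal(1)$, then every tangle of sufficiently high order in $G'$ is localised inside a single torso, so optimal branch decompositions of the torsos can be glued along the tree to a branch decomposition of $G'$ with only a constant additive overhead in width. Consequently $\max_i\bw(\mathsf{torso}_i) \le \bw(G') \le \max_i\bw(\mathsf{torso}_i) + \Ocal(a)$, and returning $b := \max_i\bw(\mathsf{torso}_i) - c'$ for a suitable $c'$ absorbing both $c$ and the adhesion loss yields $b\le \bw(G)\le b+f_{\ref{arst_cpe}}(k)$. The running time is dominated by the cubic Ratcatcher, summing to $\Ocal(|V(G)|^3)$ across all torsos, while the double-exponential factor $2^{2^{\poly(k)}}$ comes from the structural computation itself, where the constants in the structure theorem are what make the dependence on $k$ so large.

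\textbf{The main obstacle} is the structural step. Applying the Robertson--Seymour minor structure theorem generically produces torsos that embed almost-planarly in a surface of bounded genus, not torsos that are outright planar. The key observation has to be that simultaneously forbidding a toroidal $H_1$ and a projective-planar $H_2$ rules out large embeddings in every non-planar surface at once, so each piece is either planar or becomes planar after removing a bounded number of apices and flattening a bounded number of vortices of bounded depth. Pruning these corrections costs only an additive constant in branchwidth (each apex contributes at most one, and a vortex of bounded depth can be replaced by a planar gadget with bounded branchwidth cost), so the whole reduction stays within the additive budget. This generalises the single-crossing-minor argument of Demaine, Hajiaghayi, Nishimura, Ragde, and Thilikos, where a single forbidden minor already forces all torsos to be planar after apex removal; here two excluded minors are needed in order to simultaneously kill torus and projective-plane embeddings, and the bulk of the paper is presumably devoted to making this trade-off quantitative.
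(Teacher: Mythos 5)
Your proposal follows essentially the same route as the paper: exclude handle/crosscap walls to invoke the surface-exclusion structure theorem, obtain a tree decomposition with planar torsos after deleting a bounded apex set and the inner vertices of bounded-width vortices (the paper quantifies the additive vortex cost via the slope--tangle duality of Seymour--Thomas rather than a ``replacement gadget,'' but to the same effect), then run the Ratcatcher on each planar torso and combine via a tangle-localisation lemma along the bounded-adhesion tree. One small inaccuracy: the paper guarantees that the planar torsos are \emph{minors} of $G'$, which is why $\max_t \bw(G'_t)$ is already a valid lower bound $b \leq \bw(G') \leq \bw(G)$ without any correction term, so the subtraction of $c'$ in your output is unnecessary (and mildly inconsistent with your own two-sided bound); the paper simply returns $b=\max_t\bw(G'_t)$ and handles the $b\leq 3$ corner case separately via the treewidth--branchwidth inequality.
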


\noindent 
In the above, by the term \emph{$\{H_{1},H_{2}\}$-minor free graph} we mean a graph that does neither contain $H_{1}$ nor $H_{2}$ as a minor.
We also use the term $\poly(x)$ as a shortcut of $x^{O(1)}$.
We next remark that \cref{arst_cpe} cannot be extended to \change{general}{all}  graphs,%
\revone{1. Page 3, line 29: ``General graphs''  - do you mean to all graphs?}%
\ans{Fixed.} 
assuming $\mathsf{P}\neq\mathsf{NP}$.

\begin{theorem}\label{lower}
Branchwidth does not admit a polynomial time additive approximation algorithm, unless $\mathsf{P}=\mathsf{NP}$.
\end{theorem}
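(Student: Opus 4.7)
The strategy is to amplify the $\mathsf{NP}$-hardness of computing branchwidth exactly (Seymour and Thomas~\cite{SeymourT94callr}) by a gap-producing self-reduction. Assume, toward a contradiction, that there is a polynomial-time algorithm $A$ that, on every graph $G$, outputs an integer $b$ with $b\le\bw(G)\le b+c$ for some fixed constant $c$. I aim to turn $A$ into a polynomial-time \emph{exact} algorithm for $\bw$, contradicting $\mathsf{P}\neq\mathsf{NP}$.

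The engine of the reduction is a polynomial-time graph operation $G\mapsto G^{(t)}$ which (i) is computable in time polynomial in $|V(G)|$ for every fixed integer $t$, and (ii) scales branchwidth by a precisely known, unboundedly growing factor: $\bw(G^{(t)})=\alpha(t)\cdot\bw(G)+\beta(t)$ with $\alpha(t)\to\infty$ and $\alpha,\beta$ explicit integer functions. A natural candidate is the lexicographic product $G[K_t]$, where every vertex of $G$ is blown up into a clique $K_t$ and every edge of $G$ into a complete bipartite graph $K_{t,t}$; standard arguments give a scaling of this form, with $\alpha(t)$ linear in $t$ and $\beta(t)$ a controlled lower-order correction. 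Picking any fixed $t$ with $\alpha(t)>c$, the approximator $A$ applied to $G^{(t)}$ returns a value $b$ with $b\le\alpha(t)\bw(G)+\beta(t)\le b+c$, and hence pins down
\[
\bw(G)\in\Bigl[\tfrac{b-\beta(t)}{\alpha(t)},\,\tfrac{b-\beta(t)+c}{\alpha(t)}\Bigr],
\]
an interval of length $c/\alpha(t)<1$; integrality of $\bw(G)$ determines it uniquely. Composing $A$ with $G\mapsto G^{(t)}$ therefore yields a polynomial-time algorithm that computes $\bw(G)$ exactly on every input graph, contradicting the $\mathsf{NP}$-completeness of branchwidth.

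The main obstacle is securing a precise enough scaling identity for branchwidth under the chosen blow-up, since an approximate scaling would leave a recovery interval of length greater than one and therefore fail to nail down $\bw(G)$. If the direct lexicographic blow-up does not admit a sufficiently clean identity, one can iterate the operation (taking $G^{(t)}=G[K_t][K_t]\cdots[K_t]$) to drive $\alpha(t)$ arbitrarily large while controlling $\beta(t)$, or work with the strong product $G\boxtimes K_t$, which coincides with $G[K_t]$ when the second factor is complete. An alternative route is a Karp-style composition directly from an $\mathsf{NP}$-hard decision version of branchwidth: produce, from a source instance, two families of graphs whose branchwidths are guaranteed to differ by more than $c$, so that any additive-$c$ approximation separates the two cases and decides the source problem in polynomial time. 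Either device suffices to collapse $\mathsf{P}$ and $\mathsf{NP}$ under the assumed approximation.
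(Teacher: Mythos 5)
Your proposal is correct and takes essentially the same route as the paper: the paper defines precisely the blow-up $G^{(t)}$ you call $G[K_t]$, proves the exact scaling identity $\bw(G^{(t)})=t\cdot\bw(G)$ (for $\bw(G)\ge 2$) via a tangle argument (Lemma~\ref{ooodblow}), and then applies the hypothetical additive-$c$ approximation to $G^{(c+1)}$ to recover $\bw(G)$ by integrality, exactly as you describe. The only detail you leave open---the exact values $\alpha(t)=t$, $\beta(t)=0$---is what the paper's lemma supplies; the paper also dispatches the degenerate case $\bw(G)<2$ separately using the FPT algorithm of Proposition~\ref{moreaparam}.
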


An easy consequence of \cref{arst_cpe} is the following EPTAS.

\begin{theorem}\label{mainth}
Branchwidth admits an EPTAS for every graph class that excludes as a minor a graph $H_{1}\in \mathcal{E}_{1}$ and a graph $H_{2}\in \mathcal{E}_{2}$.
In particular, there is a function $f_{\ref{mainth}}\colon\mathbb{N}\to\mathbb{N}$ and an algorithm that, given as input two graphs  $H_{1}\in \mathcal{E}_{1}$ and $H_{2}\in \mathcal{E}_{2},$ and an $\{H_{1},H_{2}\}$-minor free graph $G,$ outputs a value $b$ where $b\leq \bw(G) \leq (1+\epsilon)b$. Moreover, the algorithm runs in time  $\Ocal(2^{2^{\poly(k)}}|V(G)|^{3}+f_{\ref{mainth}}(\epsilon,k)|V(G)|)$, where  $k=|V(H_{1})|+|V(H_{2})|$ and $f_{\ref{mainth}}(\epsilon,k)=2^{\poly(\frac{1}{\epsilon}\cdot 2^{\poly(k)})}$.
\end{theorem}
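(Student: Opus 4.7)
The plan is to derive the EPTAS from \cref{arst_cpe} via the standard dichotomy on the magnitude of $\bw(G)$: either the branchwidth is already so large that the additive error of $f_{\ref{arst_cpe}}(k)$ is automatically absorbed by the multiplicative slack $1+\epsilon$, or it is bounded by a function of $\epsilon$ and $k$ and can thus be computed exactly by an \textsf{FPT} algorithm parameterized by its value. This ``additive-to-multiplicative lift'' is a routine trick, and the main work is just bookkeeping to verify that the resulting running time matches the claimed $f_{\ref{mainth}}(\epsilon,k)$.

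Concretely, set $c:=f_{\ref{arst_cpe}}(k)=2^{\poly(k)}$. First, invoke \cref{arst_cpe} on $G$ to compute, in time $\Ocal(2^{2^{\poly(k)}}\cdot|V(G)|^{3})$, a value $b$ with $b\leq \bw(G)\leq b+c$. If $b\geq c/\epsilon$, we output $b$: indeed $\bw(G)\leq b+c\leq b+\epsilon\cdot b=(1+\epsilon)\cdot b$, which meets the EPTAS requirement. Otherwise $b<c/\epsilon$, so $\bw(G)\leq b+c<c(1+1/\epsilon)=(1/\epsilon)\cdot 2^{\poly(k)}$. In this regime, we invoke a known linear-time \textsf{FPT} algorithm for branchwidth parameterized by its value, for instance the one of Bodlaender and Thilikos~\cite{BodlaenderT97const}, whose running time is of the form $g(\bw(G))\cdot|V(G)|$ for a computable function $g$; this produces $\bw(G)$ exactly, which trivially satisfies the $(1+\epsilon)$-approximation guarantee.

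For the running time, the additive-approximation step contributes $\Ocal(2^{2^{\poly(k)}}\cdot|V(G)|^{3})$, and the \textsf{FPT} step contributes $g(c(1+1/\epsilon))\cdot|V(G)|$. Since $g$ is (at worst) doubly exponential in its argument and $c(1+1/\epsilon)=(1/\epsilon)\cdot 2^{\poly(k)}$, the second term is of order $2^{\poly(\frac{1}{\epsilon}\cdot 2^{\poly(k)})}\cdot|V(G)|$, which matches the stated bound on $f_{\ref{mainth}}(\epsilon,k)$. There is no genuine obstacle in this argument: the only content is the observation that a constant-additive approximation, combined with any \textsf{FPT} algorithm for branchwidth, always lifts to an EPTAS because any multiplicative factor $1+\epsilon>1$ eventually absorbs the fixed additive slack.
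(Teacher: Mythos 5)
Your proposal is correct and follows essentially the same route as the paper's proof: the same dichotomy on whether $b$ is large compared to $c/\epsilon$, the same invocation of \cref{arst_cpe}, and the same fall-back to the \textsf{FPT} algorithm of \cref{moreaparam} in the bounded-branchwidth regime. One small slip: you allow $g$ to be ``(at worst) doubly exponential'' and yet conclude a bound of the form $2^{\poly(\frac{1}{\epsilon}\cdot 2^{\poly(k)})}$, but a doubly exponential $g$ would instead give $2^{2^{\poly(\frac{1}{\epsilon}\cdot 2^{\poly(k)})}}$; the claimed bound relies on $f_{\ref{moreaparam}}$ being singly exponential $2^{\poly(\cdot)}$, which is exactly what \cref{moreaparam} provides.
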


\noindent We stress that, in Theorems \ref{arst_cpe} and \ref{mainth}, the graphs $H_{1}$ and $H_{2}$ might be the same graph, i.e., a graph that can be embedded both in the projective plane and the torus. For instance, this implies 
that the algorithms of \cref{arst_cpe} and \cref{mainth} apply    for $K_{6}$-minor free graphs or, alternatively, for graphs excluding the Petersen graph (or any other graph in the Petersen family) as a minor.

\vspace{-0mm}\subsection{Main combinatorial result}
\label{subsec_main_c}

We define the \emph{elementary annulus wall}
\revone{2. Page 3, line 54: Elementary annulus wall should be defined properly. This description is fine for an informal introduction, but not rigorous enough. It should also be emphasized that the vertices are of degree 3.
}
\ans{We explain the maximum degree 3 outcome of the matching removal. 
We also fully detail  the definitions in {\cref{subsec_par_gr}}.}

 as the    parametric graph $\mathscr{W}^{\manman{A}}=\langle \mathscr{W}_{k}^{\manman{A}}\rangle_{k\in \mathbb{N}}$, where  $\mathscr{W}_{k}^{\manman{A}}$ is the graph obtained from a $(k\times 8k)$-cylindrical grid
 %\footnote{Given $k,k'\in\Nbbb$  we denote by $\mathscr{G}_{k}$, the $(k \times  k')$-cylindical grid, that is a Cartesian product of a cycle on $k$ vertices and a path on $k'$ vertices.} 
 $\mathscr{G}_{k}$ by removing some perfect matching that does not intersect the ``cycles'' of $\mathscr{G}_{k}$. \rred{Notice that the removal of the perfect matching leaves a graph of maximum degree $3$.}
We  consider two more parametric graphs, namely the \emph{elementary handle wall}  $\mathscr{W}^{\manman{H}}=\langle \mathscr{W}_{k}^{\manman{H}}\rangle_{k\in \mathbb{N}}$ and the \emph{elementary crosscap wall} $\mathscr{W}^{\manman{C}}=\langle \mathscr{W}_{k}^{\manman{C}}\rangle_{k\in \mathbb{N}},$   where  $\mathscr{W}_{k}^{\manman{H}}$ and $\mathscr{W}_{k}^{\manman{C}}$ are obtained by adding edges inside the ``internal disk'' of the elementary annulus wall  $\mathscr{W}_{k}^{\manman{A}}$, as indicated in \cref{glakeopgrid} \rred{(see \cref{subsec_par_gr} for the complete definitions)}.

We may see graphs in $\mathcal{E}_{1}$ and $\mathcal{E}_{2}$ as minors of the \emph{elementary handle wall}  $\mathscr{W}^{\manman{H}}=\langle \mathscr{W}_{k}^{\manman{H}}\rangle_{k\in \mathbb{N}}$ and the \emph{elementary crosscap wall} $\mathscr{W}^{\manman{C}}=\langle \mathscr{W}_{k}^{\manman{C}}\rangle_{k\in \mathbb{N}}.$    Indeed, as proved by Gavoille and Hilaire in \cite[\rred{Theorem 1}]{gavoille2023minor}
\revone{3. Page 4, line 18: This ``observation''  should be cited as a theorem of [18].}%
\ans{We now mention that this result is  \cite[Theorem 1]{gavoille2023minor}. Also we mention that \cref{kessent_exl} is a restatement of this result.}
, there is a constant $c$ such that 
every graph $H$ in $\mathcal{E}_{1}$ (resp. $\mathcal{E}_{2}$)
 is a minor of $\mathscr{W}_{k}^{\manman{C}}$ (resp. $\mathscr{W}_{k}^{\manman{H}}$), for every  $k\geq c\cdot |V(H)|^2$. 

A \emph{handle wall} (resp. \emph{crosscap wall}) of \emph{order} $k$
is some subdivision of $\mathscr{W}_{k}^{\manman{H}}$ (resp. $\mathscr{W}_{k}^{\manman{C}}$). A \emph{handle wall} (resp. \emph{crosscap wall}) of a graph $G$ is a subgraph of $G$ that is a \emph{handle wall} (resp. \emph{crosscap wall}). It is easy to observe that the exclusion of $\mathscr{W}_{k}^{\manman{H}}$ (resp. $\mathscr{W}_{k}^{\manman{C}}$) as  \rred{a} minor is equivalent to the exclusion of a \emph{handle wall} (resp. \emph{crosscap wall}) of order $k$ as a subgraph.
This is because the maximum degree of $\mathscr{W}_{k}^{\manman{H}}$ and $\mathscr{W}_{k}^{\manman{C}}$ is at most three for all $k$.
This, combined with the aforementioned {results of} \cite{gavoille2023minor}, \change{implies 
the following}{permits to restate them as follows}:

\begin{observation}
\label{kessent_exl}
{There is some constant $c$ such that if a graph excludes some graph $H$ in $\mathcal{E}_{1}$ (resp. $\mathcal{E}_{2}$) as a minor, it also excludes the handle (resp. crosscap) wall  of order $c\cdot |V(H)|^{2}$ as a subgraph.}
\end{observation}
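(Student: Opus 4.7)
The plan is to read the observation as a direct contrapositive of the Gavoille--Hilaire embedding theorem quoted in the paragraph immediately preceding, combined with the subgraph/minor equivalence for subcubic hosts that is singled out in the same discussion.

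First I would fix $H\in\mathcal{E}_{1}$; the case $H\in\mathcal{E}_{2}$ is symmetric, with crosscap walls replacing handle walls. Let $c$ be the absolute constant supplied by Gavoille--Hilaire, so that $H$ is a minor of the elementary handle wall $\mathscr{W}_{k}^{\texttt{H}}$ for every $k\geq c\cdot |V(H)|^{2}$. Set $k:=c\cdot |V(H)|^{2}$ and argue by contraposition: suppose that the $H$-minor-free graph $G$ contained a handle wall of order $k$ as a subgraph. By the definition recalled just before the observation, this subgraph is a subdivision of $\mathscr{W}_{k}^{\texttt{H}}$, so $\mathscr{W}_{k}^{\texttt{H}}$ is a topological minor of $G$. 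Because $\mathscr{W}_{k}^{\texttt{H}}$ is subcubic, topological-minor and minor containment coincide (the standard observation: each degree-$\leq 3$ branch vertex of a topological model can itself serve as the entire corresponding branch set of a minor model), hence $\mathscr{W}_{k}^{\texttt{H}}$ is a minor of $G$. Transitivity of the minor relation then yields that $H$ is a minor of $G$, contradicting the hypothesis. The resulting threshold is exactly $c\cdot |V(H)|^{2}$, as claimed.

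There is essentially no obstacle here: the observation is a one-step translation between two results already at hand. The only conceptual point worth underlining is the subcubic/minor equivalence, which is precisely what lets one freely pass between ``excludes $\mathscr{W}_{k}^{\texttt{H}}$ as a minor'' and ``excludes a handle wall of order $k$ as a subgraph''; this equivalence is used implicitly in the surrounding discussion as well, so the observation could equally be stated by simply concatenating the Gavoille--Hilaire minor-embedding with that degree-three routing argument.
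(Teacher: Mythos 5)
Your proposal is correct and follows exactly the route the paper has in mind: the paper treats the observation as an immediate consequence of the Gavoille--Hilaire minor-embedding result together with the remark that $\mathscr{W}_k^{\texttt{H}}$ and $\mathscr{W}_k^{\texttt{C}}$ are subcubic (so minor containment coincides with subdivision containment), and you have simply written out that two-step contrapositive. One small note: you read the Gavoille--Hilaire statement with the pairing $\mathcal{E}_1\leftrightarrow\mathscr{W}_k^{\texttt{H}}$ and $\mathcal{E}_2\leftrightarrow\mathscr{W}_k^{\texttt{C}}$, which is what the observation requires and what the topology (handle $=$ torus, crosscap $=$ projective plane) suggests, whereas the sentence in the paper quoting Gavoille--Hilaire has the superscripts $\texttt{C}$ and $\texttt{H}$ swapped relative to the observation; your reading is the consistent one and the paper's sentence appears to contain a typo.
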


\begin{figure}[h]
\begin{center}
\includegraphics[height = 4.5cm]{elementary_crosscup_handle_grid_wall}
\end{center}
\caption{The {elementary annulus wall}  $\mathscr{W}_{9}^{\manman{A}}$, the {elementary handle wall} $\mathscr{W}_{9}^{\manman{H}}$, and   the {elementary crosscap wall} $\mathscr{W}_{9}^{\manman{C}}$ \rred{(no vertex exists in the common intersection of red and blue edges of  $\mathscr{W}_{9}^{\manman{C}}$)}.
}\label{glakeopgrid}
\end{figure}
\revthree{5. page 4, Figure 1: maybe it can be underlined that where the blue and red edges cross there is no vertices.}
\ans{Done.}

Our algorithm \change{is based on a special case}{uses as a departure point}  the main result of%
\revone{4. Page 4, line 40: ``Our algorithm is based on...''  - what do you mean by this, do you only follow the same ideas or you use particular results? Or do you mean directly Theorem 1.5? Please specify this clearly.} 
\ans{The results of \cite{thilikos2024Wexcluding} serve as the departure point of our proofs. We clarify this in the revised version.}
{\cite{thilikos2024Wexcluding}} that reveals the structure of graphs that do not contain a handle wall nor a crosscap wall of order $k$ as subgraphs.
\rred{Using the results of \cite{thilikos2024Wexcluding}, we prove that} each such graph admits a tree decomposition in pieces that are ``$\bw$-almost planar'', in the sense that their branchwidth is not much bigger than the branchwidth of some of their planar subgraphs (\cref{from_surfex}).
Moreover, such a decomposition (for fixed values of $k$) can be constructed in  polynomial time.

\begin{theorem}\label{th_w1rgol}
There is a function $f_{\ref{th_w1rgol}}\colon\mathbb{N}\to\mathbb{N}$ and an algorithm that, given a graph $G$ and a non-negative integer $k,$ either outputs a  
handle wall or a crosscap  wall of $G$ of order $k$ or outputs a subgraph $G'$ of $G$ where $\bw(G)-\bw(G')\leq f_{\ref{th_w1rgol}}(k)$ and a tree decomposition $(T',\beta')$ of $G'$ such that, for every $t\in V(T'),$
\begin{itemize}
\item[(a)] the torso $G_{t}'$ is planar,
\item[(b)] the torso  $G_{t}'$ is a minor of $G',$ and 
\item[(c)] every adhesion set of $t$ has size at most three.
\end{itemize}
Moreover, $f_{\ref{th_w1rgol}}(k)=2^{\poly(k)}$ and the above algorithm runs in time $\Ocal(2^{2^{\poly(k)}}|V(G)|^3)$
\end{theorem}

For the formal definitions of tree decompositions, torsos, and adhesion sets, see \cref{dc_sefd}.
Notice that the tree decomposition $(T',\beta')$ of $G'$ in \cref{th_w1rgol} implies that $G'$ enjoys the structure of $K_{5}$-minor free graphs given by Wagner's theorem~\cite{Wagner37ubere}.
This permits the following more compact restatement of \cref{th_w1rgol}.

\begin{corollary}
For every $k,$ there exists some $r$ such that every graph that does not contain a handle wall or a crosscap wall of order $k$  as a subgraph, contains a $K_{5}$-minor free subgraph $G'$ where $\bw(G)-\bw(G')\leq r$.
\end{corollary}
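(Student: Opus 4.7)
The plan is to obtain the corollary as an immediate consequence of \cref{w1rgol} combined with (one direction of) Wagner's classical characterization~\cite{Wagner37ubere} of $K_{5}$-minor free graphs. Fix $k \in \mathbb{N}$ and set $r := f_{\ref{w1rgol}}(k)$. Let $G$ be any graph that contains neither a handle wall nor a crosscap wall of order $k$ as a subgraph. Applying \cref{w1rgol} to $G$ and $k$, the algorithm cannot return a handle or crosscap wall of order $k$ (since none exists by hypothesis), so it must output a subgraph $G'$ of $G$ together with a tree decomposition $(T',\beta')$ of $G'$ such that $\bw(G)-\bw(G') \leq f_{\ref{w1rgol}}(k) = r$ and such that every torso $G'_t$ is planar while every adhesion set has size at most three.

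It remains to show that $G'$ is $K_{5}$-minor free. The tree decomposition $(T',\beta')$ exhibits $G'$ as being assembled by recursively gluing the torsos along shared adhesion sets of size at most three. After adding the (at most three) missing edges to complete each adhesion set to a clique --- an operation that stays inside the planar torso and therefore preserves planarity --- this is precisely an iterated clique-sum of order at most three among planar graphs. A standard consequence of Wagner's theorem states that the class of $K_{5}$-minor free graphs is closed under clique-sums of order at most three: intuitively, because $K_{5}$ is $4$-connected, no $K_{5}$-minor can straddle a separator of size at most three, and hence must lie entirely on one side of any such clique-sum. Since every planar graph is $K_{5}$-minor free, iterating along $T'$ yields that $G'$ is $K_{5}$-minor free.

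The only substantive step is the clique-sum closure invoked above, which is classical; consequently the corollary is essentially a direct re-reading of \cref{w1rgol}, and no further structural or algorithmic ingredient is required. The main (mild) subtlety --- hence the step I would expect a referee to check --- is that completing the adhesion sets of $G'$ to triangles yields a \emph{bona fide} clique-sum decomposition between \emph{planar} building blocks, which is precisely what item~(a) and item~(c) of \cref{w1rgol} guarantee.
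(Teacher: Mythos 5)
Your argument is correct and matches the paper's intended (unstated) reasoning: invoke \cref{w1rgol} to get the subgraph $G'$ with a tree decomposition into planar torsos glued along adhesion sets of size at most three, then use the clique-sum closure of $K_{5}$-minor free graphs (the one direction of Wagner's theorem the paper alludes to) to conclude that $G'$ is $K_{5}$-minor free, and set $r=f_{\ref{w1rgol}}(k)$. One tiny imprecision worth noting: the step ``adding the missing edges to complete each adhesion set to a clique'' is already built into the notion of torso (item~(a) asserts the \emph{torso} is planar, and the torso by definition has each adhesion set as a clique), so no new edges need to be added to the torsos themselves; the edges are only added relative to $G'$, producing a planar-clique-sum supergraph of $G'$, whose $K_{5}$-minor freeness then passes down to the subgraph $G'$.
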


The decomposition of \cref{th_w1rgol} permits us to call the polynomial algorithm %
\revone{5. Page 5, line 9: I suggest that when you cite papers (like [35], or [6],[33]) for results you use, it is polite to include authors' names with the citation.} 
\ans{Done, for all papers mentioned.}
of \rred{Seymour and Thomas}~\cite{SeymourT94callr} in each $G_{t}'$ and suitably combine the obtained values in order to design the constant-additive approximation for branchwidth claimed by \cref{arst_cpe}.  
The technical details of how to prove \cref{th_w1rgol}, and how to make algorithmic use of it, are given in \cref{secintroduction}. In \cref{sec_outline}
we give a brief outline on the combinatorial results supporting our algorithm. We conclude with some open problems and research directions in \cref{opne_all}.

\section{Decompositions and algorithms}\label{secintroduction}
Given two non-negative integers $a,b\in\mathbb{N}$ we denote the set $\{z\in\mathbb{N} \mid a\leq z\leq b\}$ by $[a,b].$
In case $a>b$ the set $[a,b]$ is empty.
For an integer $p\geq 1,$ we set $[p]\coloneqq [1,p]$ and $\mathbb{N}_{\geq p}\coloneqq\mathbb{N}\setminus [0,p-1].$
The graphs considered in this paper are undirected, finite,  without loops, and may have multiple edges, i.e., , the edge set is a multi-set.
In fact, multiple edges will be relevant in \cref{sec_outline} where we 
deal with spherical embeddings.
We use standard graph-theoretic notation, and we refer the reader to~\cite{diestel2016graph} for any undefined terminology.

\subsection{Parametric graphs}
\label{subsec_par_gr}

\rred{
A \emph{parametric graph} is a sequence of graphs 
$\mathscr{G}=\langle \mathscr{G}_{k}\rangle_{k\in \mathbb{N}}$. In this paper we consider 
parametric graphs that are \emph{minor-monotone}: for every 
$k\leq k'$, $\mathscr{G}_{k}$ is a minor of $\mathscr{G}_{k'}$.

Given a $(k\times 8k)$-cylindrical grid $\mathscr{G}_{k}$ 
we  say that a path in it is \emph{vertical} if it is a shortest 
path between two vertices of degree 3. Clearly, $\mathscr{G}_{k}$ has 
$8k$ vertical paths that are all pairwise disjoint and their union span 
all the vertices of $\mathscr{G}_{k}$. Let now $M_{k}$ be one of the two perfect matchings 
of $\mathscr{G}_{k}$ that consists of edges of vertical paths.
Notice that $\mathscr{G}_{k}$ contains two cycles consisting only of vertices 
of degree 3 and, among them, we pick one, and  $v_{1},\ldots,v_{8k}$
in order to denote its vertices in the order that they appear (the starting vertex $v_{1}$ is chosen arbitrarily).
We define the  \emph{elementary annulus wall}%
 as the    parametric graph $\mathscr{W}^{\manman{A}}=\langle \mathscr{W}_{k}^{\manman{A}}\rangle_{k\in \mathbb{N}}$, where $\mathscr{W}_{k}^{\manman{A}}$ is the graph obtained from $\mathscr{G}_{k}$ 
 after removing the edges in $M_{k}$. Clearly, this reduces by one 
 all degrees of the vertices of  $\mathscr{G}_{k}$, therefore 
 $\mathscr{W}^{\manman{A}}$ is a graph of maximum degree 3.
The  \emph{elementary handle wall}  $\mathscr{W}^{\manman{H}}=\langle \mathscr{W}_{k}^{\manman{H}}\rangle_{k\in \mathbb{N}}$ 
is the parametric graph where  $\mathscr{W}_{k}^{\manman{H}}$ is obtained 
from $\mathscr{W}_{k}^{\manman{A}}$ after adding all the edges 
in $\{v_{i}v_{4k+1-i}\mid i\in[k]\}$ and in $\{v_{k+i}v_{3k+1-i}\mid i\in[k]\}$.
Also the  \emph{elementary crosscap wall}  $\mathscr{W}^{\manman{C}}=\langle \mathscr{W}_{k}^{\manman{C}}\rangle_{k\in \mathbb{N}}$ 
is the parametric graph where  $\mathscr{W}_{k}^{\manman{C}}$ is obtained 
from $\mathscr{W}_{k}^{\manman{A}}$ after adding all the edges 
in $\{v_{i}v_{2k+i}\mid i\in[2k]\}$.
As we already explained in \cref{subsec_main_c}, 
an \emph{annulus wall}/\emph{handle wall}/\emph{crosscap wall} of \emph{order} $k$ is a subdivision of $\mathscr{W}_{k}^{\manman{H}}$/$\mathscr{W}_{k}^{\manman{A}}$/$\mathscr{W}_{k}^{\manman{C}}$, where 
the \emph{subdivision} of an edge is the result of the replacement of an edge 
by a path of arbitrary length and with the same endpoints.

Notice that the above definition does not define $\mathscr{W}_{k}^{\manman{H}}$/$\mathscr{W}_{k}^{\manman{A}}$/$\mathscr{W}_{k}^{\manman{C}}$ when $k=0,1,2$. For full consistency with the definition of a parametric graph we may consider that 
$\mathscr{W}_{k}^{\manman{H}}$/$\mathscr{W}_{k}^{\manman{A}}$/$\mathscr{W}_{k}^{\manman{C}}$ is the empty graph for these values.

  }

\vspace{-0mm}\subsection{Decompositions and tangles} \label{dc_sefd}

Many ``width parameters'' for graphs are defined by ``gluing'' together simpler graphs of some particular property in a tree-like fashion.
A possible way to formalize this notion of tree-likeness is given by tree decompositions.

\begin{definition}[Tree decompositions]\label{def_treewidth} 
  Let $G$ be a graph. 
  A \emph{tree decomposition} of $G$ is a tuple $(T,\beta)$ where $T$ is a tree and $\beta\colon V(T)\to 2^{V(G)}$ is a function such that 
  \begin{enumerate} 
    \item $\bigcup_{t\in V(T)} \beta (t)= V (G),$ 
    \item for every $e\in E(G)$ there exists $t\in V(T)$ with $e\subseteq \beta(t),$ and
    \item for every $v\in V(G)$ the set $\{t\in V(T) \mid v\in \beta(t)\}$ induces a subtree of $T.$
  \end{enumerate} 
  The \emph{torso} of $(T,\beta)$ on a node $t$ is the graph, \revthree{8. Why do we need that adhesions are small cliques?}
\ans{This is the standard definition of a torso of a node $t$ of a tree decomposition. Typically the adhesions of $t$ are made cliques in order to ``represent'' the fact that they are separating the bag corresponding to this node from the rest of the graph. Torsos are important in proofs when dealing with tree/branch decompositions or tangles.}
  denoted by  $G_t$, obtained by adding in $G[\beta(t)]$ all edges between the vertices of $\beta(t) \cap \beta(t')$ for every neighbor $t'$ of $t$ in $T.$
{For each $t\in V(T),$ we define the \emph{adhesion sets of $t$}
 as the sets in $\{ \beta(t)\cap\beta(d) \mid dt\in E(T) \}$ and the maximum size of an adhesion set of $t$ is called the \emph{adhesion of $t$}.}
 \rred{Clearly, in case $T$ has only one node, then its adhesion set is empty and we set the adhesion of $(T,\beta)$ to be $0$.}
\revone{6. Page 5, line 46: This is not quite correct when $T$ has one node.}%
\ans{We now explain that if $T$ has only one node, then its adhesion set is empty.}

The \emph{adhesion} of $(T,\beta)$ is the maximum adhesion of a node of $(T,\beta)$.
The \emph{treewidth} of $G,$ denoted by $\tw(G),$ is the minimum $k$ for which $G$ has a tree decomposition where all torsos have at most $k+1$ vertices.
\end{definition}

\begin{observation}\label{vocmoptreedec}
Let $G$ be a graph and let $(T,\beta)$ be a tree decomposition of $G$.
Then $\tw(G)\leq \max\{\tw(G_t)\mid t\in V(T)\}$.
\end{observation}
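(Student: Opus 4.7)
The plan is to construct an explicit tree decomposition of $G$ of width at most $w \coloneqq \max\{\tw(G_t) \mid t \in V(T)\}$ by refining $(T,\beta)$: for each node $t$, I would replace the single bag $\beta(t)$ with an optimal tree decomposition of the torso $G_t$, then glue these local decompositions together along the edges of $T$.

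More precisely, for each $t \in V(T)$ fix an optimal tree decomposition $(T_t,\beta_t)$ of $G_t$ of width $\tw(G_t)\leq w$. The key observation is that, for every edge $tt' \in E(T)$, the adhesion set $S \coloneqq \beta(t) \cap \beta(t')$ induces a clique in $G_t$ by the very definition of the torso. Since any clique of a graph is contained in some bag of any of its tree decompositions, there exists $s_t \in V(T_t)$ with $\beta_t(s_t) \supseteq S$; symmetrically, pick $s_{t'} \in V(T_{t'})$ with $\beta_{t'}(s_{t'}) \supseteq S$. Build $T^\star$ by taking the disjoint union of the trees $T_t$ and adding, for every edge $tt' \in E(T)$, a new edge $s_t s_{t'}$; define $\beta^\star(s) \coloneqq \beta_t(s)$ whenever $s \in V(T_t)$. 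Since $T$ is a tree and each $T_t$ is a tree, $T^\star$ is again a tree.

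It remains to verify that $(T^\star,\beta^\star)$ is a valid tree decomposition of $G$. Coverage of $V(G)$ and $E(G)$ is immediate because every vertex of $G$ lies in some $\beta(t)\subseteq V(G_t)$ and every edge of $G$ lies in $G[\beta(t)]\subseteq G_t$ for some $t$, and the local decomposition $(T_t,\beta_t)$ already covers $G_t$. The only nontrivial axiom is connectedness: given $v\in V(G)$, the set $T_v\coloneqq\{t\in V(T)\mid v\in\beta(t)\}$ induces a subtree of $T$, and for each $t\in T_v$ the set $\{s\in V(T_t)\mid v\in\beta_t(s)\}$ induces a subtree of $T_t$. Whenever $tt'\in E(T)$ with $t,t'\in T_v$, one has $v\in \beta(t)\cap\beta(t')\subseteq \beta_t(s_t)\cap\beta_{t'}(s_{t'})$, so the two local subtrees are joined through the new edge $s_ts_{t'}$; it follows that the set of nodes of $T^\star$ whose bag contains $v$ is connected. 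Since the bags of $(T^\star,\beta^\star)$ are exactly those of the various $(T_t,\beta_t)$, its width is $w$, which yields $\tw(G)\leq w$. The only step that requires a bit of care is the clique-in-a-bag fact used to locate $s_t$, which is a standard Helly-type argument on subtrees of a tree; everything else is bookkeeping.
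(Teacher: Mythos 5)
The paper records this as an \emph{Observation} and does not supply a proof, so there is no argument of its own to compare against; your proof is the standard construction for this fact and it is correct. Briefly: you correctly use that an adhesion set is a clique in the torso by definition, and the Helly-type fact that every clique is covered by some bag of any tree decomposition, to locate attachment nodes in the local decompositions; the gluing and the verification of the three axioms (in particular connectedness via the observation that for $t\notin T_v$ no bag of $T_t$ can contain $v$, since all bags of $T_t$ are subsets of $\beta(t)$) are all sound. One small notational point: the node you call $s_t$ actually depends on the \emph{edge} $tt'$ of $T$, not just on $t$, so if $t$ has several neighbors you are choosing possibly several distinct attachment nodes inside $T_t$; writing something like $s_{t,t'}$ would avoid the appearance that a single $s_t$ is shared across all incident edges. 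This does not affect the validity of the argument, since the connectedness step only needs that for each edge $tt'$ both chosen nodes have $\beta(t)\cap\beta(t')$ in their bags.
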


\paragraph{Branchwidth.}
A \emph{branch decomposition} of a graph $G$ is a pair $(T,\delta),$ where $T$ is a tree with vertices of degree one or three and $\delta$ is a bijection from $E(G)$ to the set of leaves of $T$.
The \emph{order function} $\omega\colon E(T)\rightarrow 2^{V(G)}$ of a branch decomposition maps every edge $e$ of $T$ to a subset of vertices $\omega(e)\subseteq V(G)$ as follows.
The set $\omega(e)$ consists of all vertices $v \in V(G)$ such that there exist edges $f_1, f_2 \in E(G)$ with $v\in f_1 \cap f_2,$ and such that the leaves $\delta(f_{1}),$ $\delta(f_{2})$ are in different components of $T-e$.

The \emph{width} of $(T,\delta)$ is equal to $\max_{e\in E(T)}|\omega(e)|$ and is zero, in case $|V(T)|=1$.
The \emph{branchwidth} of ${G},$ denoted by $\bw(G),$ is the minimum width over all branch decompositions of $G$.
If $E(G)=\emptyset,$ then $\bw(G)=0$.

The next two results reveal important algorithmic properties on branchwidth. 

\begin{proposition}[Ratcatcher algorithm \rred{of Seymour and Thomas} \cite{SeymourT94callr}]\label{pr_rat_k}
There exists an algorithm that, given a planar graph $G$, outputs $\bw(G)$ in time $\Ocal(|V(G)|^3)$
\end{proposition}

\begin{proposition}[\rred{Bodlaender and Thilikos}  \cite{BodlaenderT97const}]\label{pr_bodl_linear}
There exists a function $f_{\ref{pr_bodl_linear}}:\Nbbb\to\Nbbb$ and an algorithm that, given a graph $G$, outputs $\bw(G)$ in time $\Ocal(f_{\ref{pr_bodl_linear}}(\bw(G))\cdot |V(G)|)$, where $f_{\ref{pr_bodl_linear}}(k)=2^{\poly(k)}$
\end{proposition}

According to \rred{Robertson and Seymour in}~\cite{robertson1991graph}, for every graph $G$ with at least one edge it holds that $\max\{\bw(G),2\}\leq \tw(G)+1\leq \max\{ \lfloor \frac{3}{2}\bw(G)\rfloor,2\}$.
For our purposes we rewrite the inequality in a slightly more relaxed form as follows.

\begin{lemma}\label{brwibw}
For every graph $G,$ $\bw(G)\leq \tw(G)+1\leq \lfloor \frac{3}{2}\bw(G)\rfloor+2$.
\end{lemma}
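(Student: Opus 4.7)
The plan is to derive \cref{brwibw} as an essentially immediate relaxation of the sharp inequality
\[
\max\{\bw(G),2\}\leq \tw(G)+1\leq \max\Bigl\{\Bigl\lfloor \tfrac{3}{2}\bw(G)\Bigr\rfloor,2\Bigr\},
\]
which the preceding paragraph attributes to \cite{robertson1991graph} and which holds whenever $G$ has at least one edge. My task therefore reduces to (i)~relaxing each of the two ``$\max$''s in this chain to the looser forms appearing in the lemma statement, and (ii)~checking the edge-less case separately.

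First I would handle the degenerate case $E(G)=\varnothing$. By convention $\bw(G)=0$, and a tree decomposition consisting of a single bag per vertex of $G$ (connected arbitrarily) shows $\tw(G)=0$. Hence $\bw(G)=0\leq 1=\tw(G)+1$ and $\tw(G)+1=1\leq 2=\lfloor\tfrac{3}{2}\cdot 0\rfloor+2$, so both inequalities of the lemma hold trivially.

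Assuming now $E(G)\neq\varnothing$, I would apply the cited Robertson--Seymour inequality. For the left-hand inequality of the lemma I simply observe
\[
\bw(G)\leq \max\{\bw(G),2\}\leq \tw(G)+1,
\]
where the first step is trivial and the second is the left half of the quoted bound. For the right-hand inequality I use the elementary fact that $\max\{x,2\}\leq x+2$ for every non-negative integer $x$ (if $x\geq 2$ the left side equals $x$; otherwise it equals $2\leq x+2$). Applied to $x=\lfloor\tfrac{3}{2}\bw(G)\rfloor$, this gives
\[
\tw(G)+1\leq \max\Bigl\{\Bigl\lfloor \tfrac{3}{2}\bw(G)\Bigr\rfloor,2\Bigr\}\leq \Bigl\lfloor \tfrac{3}{2}\bw(G)\Bigr\rfloor+2,
\]
combining the right half of the quoted bound with the elementary inequality.

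There is essentially no obstacle: the whole content of the lemma lies in the Robertson--Seymour estimate, which is imported as a black box. The only care needed is to dispatch the trivial edge-less case so that the statement can be uniformly phrased ``for every graph $G$'', without the side condition ``with at least one edge'' present in the original formulation.
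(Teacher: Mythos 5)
Your proposal is correct and follows exactly the route the paper intends: the paper itself introduces \cref{brwibw} with the phrase ``we rewrite the inequality in a slightly more relaxed form,'' referring to the Robertson--Seymour bound $\max\{\bw(G),2\}\leq \tw(G)+1\leq \max\{\lfloor\tfrac{3}{2}\bw(G)\rfloor,2\}$ stated immediately above, and leaves the trivial relaxation (dropping the $\max$ on the left, replacing $\max\{x,2\}$ by $x+2$ on the right, and dispatching the edgeless case) implicit. You have simply written out those implicit steps, which is precisely the content of the lemma.
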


\paragraph{Tangles.}
In~\cite{robertson1991graph}, Robertson and Seymour introduced tangles as the max-min counterpart of branchwidth.
Tangles have been important in the proofs of their seminal series of papers on graph minors.
Moreover, {\change{T}{t}angles}%
\revone{7. Page 6, line 26: No capital letter in ``Tangle'' .} 
\ans{Fixed.}
also play an important role in an abstract theory of connectivity, see for example~\cite{Grohe16bTangledup,DiestelK20profi,DiestelO19tangl,DiestelEW19struc,Diestel18abstr}.

\begin{definition}[Tangle]\label{def_tangle} 
A \emph{separation} in a graph $G$ is a pair $(A,B)$ of vertex sets such that $A\cup B=V(G)$ and there is no edge in $G$ with one endpoint in $A\setminus B$ and the other in $B\setminus A.$
The \emph{order} of $(A,B)$ is $|A\cap B|.$
Let $G$ be a graph and $k$ be a positive integer. 
We denote by $\mathcal{S}_k(G)$ the collection of all tuples $(A,B)$ where $A,B\subseteq V(G)$ and $(A,B)$ is a separation of order $<k$ in $G.$ 
An \emph{orientation} of $\mathcal{S}_k(G)$ is a set $\Ocal$ such that for all $(A,B)\in\mathcal{S}_k(G)$ exactly one of $(A,B)$ and $(B,A)$ belongs to $\Ocal.$ 
A \emph{tangle} of order $k$ in $G$ is an orientation $\mathcal{T}$ of $\mathcal{S}_k(G)$ such that 
{\begin{eqnarray}
\mbox{for all $(A_1,B_1),(A_2,B_2),(A_3,B_3)\in\mathcal{T}$ it holds that $G[A_1]\cup G[A_2]\cup G[A_3]\neq G.$}\label{tangleax}
\end{eqnarray}}
\end{definition}
In the above, we adopt the definition of tangles from \cite{KawarabayashiTW20Quicklyexcluding}.
\begin{proposition}[\cite{robertson1991graph}]\label{minmax_tangle}
Let $G$ be a graph where $\bw(G)\geq 2$.
Then the maximum order of a tangle of $G$ is equal to $\bw(G)$. 
\end{proposition}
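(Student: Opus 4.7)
\emph{Strategy.} The plan is to prove the equality via two inequalities: (i) every tangle of order $k$ forces $\bw(G)\geq k$, and (ii) if $\bw(G)\geq k$, then a tangle of order $k$ exists. This is the standard tangle--branchwidth duality of Robertson and Seymour, so my sketch follows that well-trodden path.

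\emph{Easy direction (tangle implies branchwidth lower bound).} Suppose for contradiction that $\mathcal{T}$ is a tangle of order $k$ and $(T,\delta)$ is a branch decomposition of width strictly less than $k$. For each edge $f\in E(T)$, removing $f$ partitions $E(G)$ into two sets $E_1(f),E_2(f)$; setting $A_f=\bigcup E_1(f)$ and $B_f=\bigcup E_2(f)$ yields a separation of order $|\omega(f)|<k$, so $\mathcal{T}$ orients exactly one of $(A_f,B_f),(B_f,A_f)$. Orient each edge $f$ of $T$ toward whichever side $\mathcal{T}$ selects as ``big''. At any internal node $t$ with incident edges $f_1,f_2,f_3$, if all three were oriented away from $t$, the corresponding ``small'' sides chosen by $\mathcal{T}$ would be the vertex sets of the edges lying in the two subtrees on the $t$-side of each $f_i$; a direct inspection shows that their union equals $V(G)$, contradicting the tangle axiom. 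Hence every internal node has an incoming edge, and a degree count in $T$ forces some leaf $\ell=\delta(e)$ to have its unique edge oriented outward; the resulting tangle element $(A_f,B_f)$ with $|A_f|\leq 2$ can then be combined with two further separations of $\mathcal{T}$ (arranged to cover $V(G)\setminus A_f$) to violate the axiom once more, finishing the contradiction.

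\emph{Hard direction (branchwidth lower bound implies tangle).} I would prove the contrapositive: if no tangle of order $k$ exists, then $\bw(G)<k$, by inductively constructing a branch decomposition of width $<k$. The key tool is submodularity of the order function: for any two separations $(A_1,B_1),(A_2,B_2)$, the ``corners'' $(A_1\cap A_2,B_1\cup B_2)$ and $(A_1\cup A_2,B_1\cap B_2)$ have orders summing to $|A_1\cap B_1|+|A_2\cap B_2|$. The absence of a tangle of order $k$ yields, for every orientation of $\mathcal{S}_k(G)$, a violating triple whose small sides cover $V(G)$. Using submodular uncrossing on such triples, one locates a non-trivial separation $(A,B)$ of order $<k$ such that both $G[A]$ and $G[B]$ (each augmented by an auxiliary clique on $A\cap B$) are strictly smaller instances that also admit no tangle of order $k$; recursively decomposing each side at width $<k$ and gluing the two decompositions across $(A,B)$ produces a branch decomposition of $G$ of width $<k$.

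\emph{Main obstacle.} The substantive work is the hard direction: designing an induction measure that strictly decreases under the split, while guaranteeing that the widths of the glued pieces stay below $k$. The uncrossing arguments that locate a ``good'' splitting separation, together with the bookkeeping needed so that the two recursive subgraphs inherit the ``no tangle'' hypothesis without inflating the order, form the delicate core of Robertson and Seymour's original proof; everything else is routine.
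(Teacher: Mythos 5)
The paper does not prove this statement at all: \cref{minmax_tangle} is presented as a black-box citation to Robertson and Seymour's Graph Minors~X paper \cite{robertson1991graph}, and the present manuscript supplies no argument of its own. So there is no ``paper's own proof'' to compare your sketch against; what you have reconstructed is (an outline of) the classical tangle--branchwidth duality proof from the cited source.

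That said, a few remarks on your sketch, since you did produce one. The broad architecture --- orient each edge of a width-$(<k)$ branch decomposition toward the side the tangle calls ``big,'' use the tangle axiom at a degree-3 node to get a contradiction, then argue the remaining case at a leaf; and for the converse run an induction on a submodular uncrossing --- is the standard route and is essentially what Robertson and Seymour do. The handwave that needs attention is the leaf case in the ``easy'' direction: once you exhibit a leaf $\ell = \delta(e)$ whose unique tree-edge $f$ is oriented outward, you get $(A_f,B_f)\in\Tcal$ with $A_f$ equal to the (at most two) endpoints of $e$, and you then appeal to ``two further separations arranged to cover $V(G)\setminus A_f$''; those two separations don't come for free, and in fact the cleaner way to close this direction is to locate a \emph{sink} of the orientation (a node with all edges incoming) rather than arguing from source-freeness at internal nodes plus a degree count, because the sink directly produces a violating triple whether it sits at a leaf or an internal node. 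You should also be aware that the tangle axiom used in this paper (\cref{def_tangle}) is stated in terms of $A_1\cup A_2\cup A_3\neq V(G)$, whereas Robertson and Seymour's (T2) concerns covering the edges of $G$ and is accompanied by a third condition (T3); the two formulations coincide only modulo degenerate cases, and whichever one you use must be wired consistently through the contradiction at the leaf. Finally, your ``hard direction'' is, as you acknowledge, only a gesture at the submodularity induction; that part is genuinely the bulk of the original proof and would need to be carried out in full to constitute a proof.
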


The next observation is a direct consequence of \cref{minmax_tangle} and the definition of a tangle.

\begin{observation}\label{apex_bw}
Let $G$ be a graph   and let $X\subseteq V(G)$ such that $\bw(G-X)\geq 2$. 
Then $\bw(G-X)\geq \bw(G)-|X|$.
\end{observation}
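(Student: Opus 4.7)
The plan is to lift a tangle from $G$ down to $G-X$ by absorbing $X$ into every separation, using the min-max duality between branchwidth and tangle order given by \cref{minmax_tangle}. First I would dispose of the trivial cases: if $\bw(G) \leq |X| + 1$ the inequality $\bw(G-X) \geq \bw(G) - |X|$ follows immediately from the assumption $\bw(G-X) \geq 2$. So from now on assume $\bw(G) \geq |X|+2 \geq 2$, and set $k = \bw(G) - |X| \geq 2$.

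Next, by \cref{minmax_tangle}, $G$ admits a tangle $\Tcal$ of order $\bw(G)$. I would then define a candidate tangle $\Tcal'$ in $G-X$ of order $k$ as follows: for every separation $(A,B) \in \Scal_k(G-X)$, observe that $(A \cup X, B \cup X)$ is a separation of $G$ of order $|A\cap B| + |X| < k + |X| = \bw(G)$, because $A \cup B = V(G-X)$ implies $(A\cup X)\cup(B\cup X)=V(G)$, and any edge of $G$ between $A\setminus(B\cup X)$ and $B\setminus(A\cup X)$ would lie entirely in $G-X$ and hence violate the fact that $(A,B)$ is a separation there. Thus I set
\[
\Tcal' = \bigl\{(A,B)\in\Scal_k(G-X) \;\bigm|\; (A\cup X, B\cup X)\in\Tcal\bigr\}.
\]

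The main verifications are that $\Tcal'$ is an orientation of $\Scal_k(G-X)$ and that it satisfies the tangle axiom \eqref{tangleax}. The orientation property follows because, for each $(A,B)\in\Scal_k(G-X)$, the lifted separation has order less than $\bw(G)$, so exactly one of $(A\cup X,B\cup X)$ and $(B\cup X,A\cup X)$ lies in $\Tcal$, hence exactly one of $(A,B)$ and $(B,A)$ lies in $\Tcal'$. For the tangle axiom, given any triple $(A_i, B_i) \in \Tcal'$ for $i\in[3]$, the lifted triple is in $\Tcal$, so $(A_1\cup X)\cup(A_2\cup X)\cup(A_3\cup X)\neq V(G)$; any witness $v\in V(G)$ avoiding this union in particular avoids $X$, hence $v\in V(G-X)$ and $v\notin A_1\cup A_2\cup A_3$, which gives $A_1\cup A_2\cup A_3\neq V(G-X)$ as required.

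Thus $\Tcal'$ is a tangle of order $k$ in $G-X$, and a second application of \cref{minmax_tangle} (together with the hypothesis $\bw(G-X)\geq 2$, which lets us invoke the min-max direction in $G-X$) yields $\bw(G-X)\geq k = \bw(G)-|X|$. I do not anticipate any real obstacle: the only thing to be slightly careful about is checking that the lifted pair is genuinely a separation of $G$, which amounts to noting that no edge can appear in $G$ between $A\setminus B$ and $B\setminus A$ beyond those present in $G-X$.
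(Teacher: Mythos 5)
Your proof is correct and is exactly the argument the paper has in mind when it calls this observation ``a direct consequence of \cref{minmax_tangle} and the definition of a tangle'': lift a maximum-order tangle of $G$ to $G-X$ by mapping each low-order separation $(A,B)$ of $G-X$ to $(A\cup X, B\cup X)$, check that it is a separation of $G$ of order less than $\bw(G)$, and inherit the orientation and the tangle axiom. The trivial-case split when $\bw(G)\leq|X|+1$ and the invocation of the hypothesis $\bw(G-X)\geq 2$ for the converse direction of the min-max theorem are both handled as the paper intends.
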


In this paper we repetitively make use of \cref{minmax_tangle} in order to work with tangles instead of branch decompositions.
Notice that we only make use of the concept of tangles to prove the correctness of our algorithm.

\begin{lemma}\label{alltog}
Let $r\in\mathbb{N}_{\geq 1}$.
Let $G$ be a graph where {$\bw(G)\geq r+1$}%
\revone{8. Page 6, line 55: I suggest to shortly explain why $\bw(G)\geq r+1$ is needed.
}
\ans{This is because in \cref{minmax_tangle}  it is required that $\bw(G)≥2$
and in \cref{alltog} we have $r\in\mathbb{N}_{\geq 1}$. We don't think that it is necessary to explain this in the proof of \cref{alltog}.}
that has a tree 
decomposition $(T,\beta)$ of adhesion at most $r$.
Then the branchwidth of $G$ is at most the maximum branchwidth of the torsos of $(T,\beta)$.
\end{lemma}

\begin{proof}
Let $k\coloneqq\bw(G)\geq r+1\geq 2$.
By \cref{minmax_tangle} we may consider some tangle $\mathcal{T}$ of $G$ of order $k$.
Notice that the \change{edges of $(T,\beta)$}{edges of $T$ in the branch decomposition $(T,\beta)$}%
\revthree{6. page 7, line 2: what are edges of $(T,\beta)$?}
\ans{Fixed}
 naturally define \rred{separations $(A,B)$ from $\mathcal{S}_k(G)$ where 
 the sets $A\cap B$ are exactly the corresponding adhesion sets}.
Therefore, $\mathcal{T}$ induces an orientation \stra{$\vec{T}$} %
of the edges of $T$ \change{with the property that $\vec{T}$}{that has} has a unique sink $t$. 
We will use $\mathcal{T}$ to define a tangle $\mathcal{T}'$ of order $k$ for $G_{t}$. \revthree{7. page 7, line 5: T arrow is never used in the definition. In general the readability of this proof could be improved:}
\ans{We now avoid the use of $\vec{T}$. We also made some changes in the proof in order to improve its readability.}
Then the result follows from \cref{minmax_tangle}.

\begin{claim}\label{cliam_bigBag}
 \rred{$|\beta(t)|\geq k$ (where $k≥2$).}
\end{claim}

\rred{\begin{proof}[Proof of Claim.]
Suppose $|\beta(t)|<k$ where $k\geq 2$. Notice that $(\beta(t),V(G)) \in \mathcal{T}$ as otherwise this would already contradict \eqref{tangleax}.
Now, suppose $V(T)=\{t\}$.
In this case we have that $G_t=G$ and our claim follows trivially.
So we may assume that $t$ has at least one neighbor, say $d$, in $T$.
Let $(A_d,B_d)$ be the separation induced the edge $dt$ and assume w.l.o.g. that $\beta(t)\subseteq B_d$, then $(A_d,B_d) \in \mathcal{T}$ and $A_d\cap B_d \subseteq \beta(t)$.
Now let $N_T(t)=\{d_1,\dots,d_n\}$ and consider any non-empty set $J\subseteq [n]$.
Then $(\bigcup_{i\in J}A_{d_i},\bigcap_{i\in J}B_{d_i}) \in \mathcal{S}_k(G)$ where $\beta(t)\subseteq \bigcap_{i\in J}B_{d_i}$.

We claim that $(\bigcup_{i\in J}A_{d_i},\bigcap_{i\in J}B_{d_i})\in\mathcal{T}$.
This follows by induction on $|J|$ as follows.
For $|J|=1$ we have already provided an argument above.
So let $J\geq 2$ and let $j\in J$ be the largest index in $J$, we set $J' \coloneqq J\setminus\{ j\}$.
By induction, we know that $(\bigcup_{i\in J'}A_{d_i},\bigcap_{i\in J'}B_{d_i}) \in \mathcal{T}$ and we have that $(A_{d_j},B_{d_j})\in\mathcal{T}$ from the base case.
If now $(\bigcap_{i\in J}B_{d_i},\bigcup_{i\in J}A_{d_i})\in\mathcal{T}$ we have that $G[A_{d_j}] \cup G[\bigcup_{i\in J'}A_{d_i}] \cup G[\bigcap_{i\in J} B_{d_i}]=G$.
This contradicts \eqref{tangleax} in \cref{def_tangle}.
Hence, our claim follows.

Now, by the above  claim,  we have that $(X,Y) \coloneqq (\bigcup_{i\in[n]}A_{d_i},\bigcap_{i\in [n]}B_{d_i})\in\mathcal{T}$.
However, since for every vertex $v\in V(G)\setminus \beta(t)$ there is some $i\in[n]$ such that $v\in A_{d_i}$ we have that $(X,Y)=(X,\beta(t))\in\mathcal{T}$.
As $(\beta(t),V(G))\in \mathcal{T}$ this contradicts \eqref{tangleax} in \cref{def_tangle}.
{It follows that $|\beta(t)|\geq k$ as desired.}
\end{proof}}

For the construction of $\mathcal{T}',$ we consider every separation $(A',B')\in\mathcal{S}_{k}(G_t)$.
As all adhesion sets of $t$ in $T$ induce cliques of size $\leq r$ in $G_{t}$ and $k>r,$ there is a $(A,B)\in \mathcal{S}_{k}(G)$ such that $A'=A\cap V(G_{t})$, $B'=B\cap V(G_{t})$, and $A'\cap B'=A\cap B$.
Notice that this correspondence is not unique as there can be \change{soveral}{several}%

\revthree{9. soveral ~$\longrightarrow$~ several} 
\ans{Fixed}

such separations $(A,B)\in\mathcal{S}_k(G)$.
However, when restricted to $G_t$\rred{,} all of them become $(A',B')$.
Moreover, if $(A_1,B_1)$ and $(A_2,B_2)$ correspond to $(A',B')$ in the above way\rred{,} then $(A_1,B_1)\in \mathcal{T}$ if and only if $(A_2,B_2)\in\mathcal{T}$\rred{,} by our choice of $t$% 
%\rred{(recall that $k≥2$)}%
.%
%\sed{What we add here that "\rred{(recall that $k≥2$)}"?}
\revthree{10. What if $A' = B' = \beta(t) = \{v\}$? Then the choice is not well-defined, right?}
\ans{We added Claim \ref{cliam_bigBag} together with a proof to take care of this. Now the case where $|\beta(t)|\leq 1$ is explicitly ruled out.}

We now define $\mathcal{T}'$ so that for every $(A',B')\in\mathcal{S}_{k}(G_t),$  $(A',B')\in \mathcal{T}'$ if and only if $(A,B)\in \mathcal{T}$ where $(A,B)$ is any corresponding separation of $G$ as described above.
Clearly $\mathcal{T}$ is an orientation of  $\mathcal{S}_{k}(G_t)$.
Moreover, again because all adhesion sets of $t$ induce cliques of size $\leq r$ in $G_{t},$ it follows that if for some $(A_1',B_1'),(A_2',B_2'),(A_3',B_3')\in\mathcal{T}'$ we have $G_{t}[A_1']\cup G_{t}[A_2']\cup G_{t}[A_3']= G_{t}$ then it also holds that $G[A_1]\cup G[A_2]\cup G[A_3]= G$.
Therefore, $\mathcal{T}'$ is a tangle of order $k$ in $G_{t},$ as required.
\end{proof}

Given a graph $G$ and a positive integer $k$ we define the graph  $G^{(k)}$ by replacing each vertex $v$ of $G$ with a $k$-clique $K_{v}$ and each edge $e=uv$ with all $k^2$ edges between the vertices of $K_{v}$ and the vertices of $K_{u}$.

\begin{lemma}\label{ooodblow}
Let $G$ be a graph where $\bw(G)\geq 2$ and $k\in\mathbb{N}_{\geq 1}$.
Then $\bw(G^{(k)})=k\cdot\bw(G)$.
\end{lemma}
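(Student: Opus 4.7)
The plan is to prove the two inequalities $\bw(G^{(k)})\le k\cdot\bw(G)$ and $\bw(G^{(k)})\ge k\cdot\bw(G)$ separately: the upper bound by an explicit blow-up of a branch decomposition, and the lower bound by lifting a tangle through \cref{minmax_tangle}.

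For the upper bound, I would start from an optimal branch decomposition $(T,\delta)$ of $G$ of width $\bw(G)$ and, for each edge $uv\in E(G)$, replace the leaf $\delta(uv)$ of $T$ by a small binary subtree $T_{uv}$ whose $k^{2}$ leaves biject with the $k^{2}$ edges of $G^{(k)}$ between $K_u$ and $K_v$. The $\binom{k}{2}$ intra-clique edges of each $K_v$ are accommodated as additional leaves of the subtree $T_{e_v}$ corresponding to an arbitrary fixed edge $e_v\in E(G)$ incident to $v$. Every edge of the new tree is then of one of two types: either it corresponds to an edge $f\in E(T)$, in which case its middle set is the blow-up $\bigcup_{w\in\omega(f)}K_w$ of $\omega(f)$ and hence has size $k\cdot|\omega(f)|\le k\cdot\bw(G)$; or it lies inside some subtree $T_{uv}$, in which case its middle set is contained in $K_u\cup K_v$ and has size at most $2k\le k\cdot\bw(G)$, using the hypothesis $\bw(G)\ge 2$.

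For the lower bound I would take a tangle $\mathcal{T}$ of $G$ of order $\bw(G)$ supplied by \cref{minmax_tangle} and lift it to a tangle $\mathcal{T}^{*}$ of $G^{(k)}$ of order $k\cdot\bw(G)$. The key structural fact is that in any separation $(A,B)$ of $G^{(k)}$ each clique $K_v$ must lie entirely in $A$ or entirely in $B$: a vertex of $K_v$ in $A\setminus B$ together with one in $B\setminus A$ would, through a clique-edge of $K_v$, contradict the separation condition. Hence the pair $(\bar A,\bar B)$ defined by $\bar A=\{v\in V(G):K_v\subseteq A\}$ and $\bar B=\{v\in V(G):K_v\subseteq B\}$ is a separation of $G$, and since every $v\in\bar A\cap\bar B$ contributes the full $k$-clique $K_v$ to $A\cap B$, we have $|\bar A\cap\bar B|\le|A\cap B|/k<\bw(G)$, so $(\bar A,\bar B)\in\mathcal{S}_{\bw(G)}(G)$. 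Declaring $(A,B)\in\mathcal{T}^{*}$ iff $(\bar A,\bar B)\in\mathcal{T}$ then yields a consistent orientation of $\mathcal{S}_{k\bw(G)}(G^{(k)})$, and this orientation is easily seen to be well-defined from the orientation of $\mathcal{T}$.

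The hardest step, I expect, is verifying the tangle axiom for $\mathcal{T}^{*}$. Given three separations $(A_i,B_i)\in\mathcal{T}^{*}$, the axiom for $\mathcal{T}$ yields some vertex $v\in V(G)\setminus(\bar A_1\cup\bar A_2\cup\bar A_3)$; by the definition of $\bar A_i$ this forces $K_v\subseteq B_i$ and $K_v\not\subseteq A_i\cap B_i$ for every $i$, so in particular $K_v$ meets each $B_i\setminus A_i$. Turning this into a single vertex of $K_v$ that lies outside every $A_i$, and hence witnesses $A_1\cup A_2\cup A_3\ne V(G^{(k)})$, is where the argument must carefully combine the adhesion bound $|A_i\cap B_i|<k\cdot\bw(G)$ with the fact that each $K_v\cap A_i$ is contained in $A_i\cap B_i$ and has size at most $k-1$; this is precisely where the factor $k$ in the target order $k\cdot\bw(G)$ is consumed and constitutes the main technical content of the proof.
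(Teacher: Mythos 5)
Your two-sided plan (upper bound by an explicit branch decomposition, lower bound by tangle lifting) separates cleanly into a half that works and a half that does not. The upper bound half is correct and in fact more constructive than the paper's argument: the paper passes tangles between $G$ and $G^{(k)}$ in both directions, whereas you build a width-$(k\cdot\bw(G))$ branch decomposition of $G^{(k)}$ directly, and the middle-set accounting you sketch (old tree edges blow up by exactly a factor $k$; edges inside a gadget $T_{uv}$ have middle set inside $K_u\cup K_v$, of size $2k\le k\cdot\bw(G)$) goes through. The lower bound half coincides with the paper's one-sentence justification: orient each separation $(A,B)$ of $G^{(k)}$ by its projection $(\bar A,\bar B)$ under the given tangle $\mathcal{T}$ of $G$. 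You are right that the only substantive step is verifying the tangle axiom for the lifted orientation $\mathcal{T}^{*}$, but you leave exactly that step open, merely asserting that combining $|A_i\cap B_i|<k\cdot\bw(G)$ with $|K_v\cap A_i|\le k-1$ will produce a vertex of $K_v$ avoiding all three $A_i$.

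That step cannot be completed, and this is a genuine gap, not a technicality. From $K_v\not\subseteq A_i$ one gets only $|K_v\cap A_i|\le k-1$, and three proper subsets of a $k$-set can of course still cover it: already for $k=2$, with $K_v=\{x,y\}$, put $A_1\cap K_v=\{x\}$ and $A_2\cap K_v=\{y\}$. The order bound $|A_i\cap B_i|<k\cdot\bw(G)$ gives no leverage, because the obstruction is local to a single clique $K_v$. Indeed the inequality $\bw(G^{(k)})\ge k\cdot\bw(G)$ is false in general. Taking $G$ to be the complete graph on four vertices (so $\bw(G)=3\ge 2$) and $k=3$, the graph $G^{(3)}$ is the complete graph on twelve vertices; using $\bw$ of a complete graph on $n\ge 3$ vertices equal to $\lceil 2n/3\rceil$, one gets $\bw(G^{(3)})=\lceil 24/3\rceil=8$, while $k\cdot\bw(G)=9$. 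So the intended equality in the statement fails, and the same objection applies verbatim to the paper's own one-line claim that the lifted orientation ``is a tangle of $G^{(k)}$ of order $kt$''. What your argument (and only your argument, not the paper's) actually establishes is the correct one-sided bound $\bw(G^{(k)})\le k\cdot\bw(G)$.
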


\begin{proof}
By \cref{minmax_tangle}, we may prove that every tangle $\mathcal{T}$ in $G$ of order $t$ corresponds to a tangle $\mathcal{T}'$ in $G^{(k)}$ of order $kt$.

Suppose that $\mathcal{T}$ is a tangle in $G$ of order $t$.
Let now $(A',B')\in \mathcal{S}_{kt}(G^{(k)})$.
Notice that, for every $v\in V(G),$ the vertices of $K_{v}$ should be either all in $A'$ or all in $B'$. 
This means that we may consider a separation $(A,B)$ of $G$ where $A=\{v\in V(G)\mid V(K_{v})\subseteq A\}$ and $B=\{v\in V(G)\mid V(K_{v})\subseteq B\}$ that has order $k$.
By orienting each separation $(A',B')$ of $G^{(k)}$ in the same way as the corresponding $(A,B)$ in $\mathcal{T}$ we obtain a tangle of $G^{(k)}$ of order $kt$.
Suppose now that $\mathcal{T}'$ is a tangle in $G^{(k)}$ of order $kt$ and let $(A,B)$ be a separation of $G$ of order $t$.
Observe that $(A',B')=(\bigcup_{v\in B}V(K_{v}),\bigcup_{v\in A}V(K_{v}))\in\mathcal{S}_{kt}(G^{(k)})$.
As before, we may use the orientation of $\mathcal{S}_{kt}(G^{(k)})$ defined by $\mathcal{T}',$ in order to define an orientation $\mathcal{T}$ of $\mathcal{S}_{t}(G)$ that is a tangle of $G$ of order $t$.
\end{proof}

\cref{ooodblow} permits us to prove that one may not expect, in general, any constant-additive approximation for branchwidth that runs in polynomial time.
The following proof of \cref{lower} uses a reduction employed by~\cite{BodlaenderGHK95appro} for deriving the analogous result for treewidth.

\medskip\noindent \textbf{\cref{lower}} (\textsl{restated}). {\midnightblack{{Branchwidth does not admit a polynomial time additive approximation algorithm, unless $\mathsf{P}=\mathsf{NP}$.}}

\begin{proof}%[{Proof of \cref{lower}}]%
\revone{9. Page 7, line 49: I suggest to restate the theorem before its proof. The same suggestion for subsequent proofs.}
\ans{Done. We now add restatements before all postponed proofs.}
Suppose that there is some integer $k\geq 0$ and an algorithm that, for every graph $G,$ outputs a number $b$ such that $\bw(G)\leq b\leq \bw(G)+k,$ for some constant $k\geq 0$.
We use this algorithm as a subroutine to compute $\bw(G)$.
If $\bw(G)<2,$ then we may compute $\bw(G)$ using \cref{pr_bodl_linear}.
If $\bw(G)\geq 2,$ we apply the additive approximation algorithm on $G^{(k+1)}$ and, by calling upon \cref{ooodblow}, obtain a value $b$ where $\bw(G^{(k+1)})\leq b\leq \bw(G^{(k+1)})+k\Rightarrow\bw(G)\leq b/(k+1)\leq \bw(G)+1-\frac{1}{k+1}<\bw(G)+1,$ which implies that $\bw(G)=\lfloor b/(k+1)\rfloor$.
This would make possible to decide {in polynomial time}  whether, given a graph $G$ and an integer $r$,  $\bw(G)\leq r$, which, as proven \rred{by Seymour and Thomas \cite{SeymourT94callr}}, is an \textsf{NP}-complete problem.
\end{proof}

Notice that the blow-up operation used to define $G^{(k)}$ introduces large cliques to the graph.
This means that the operation does not respect the exclusion of a minor which means that \cref{ooodblow} and \cref{lower} do not preclude additive approximation algorithms for the branchwidth of graphs excluding a minor.

\vspace{-0mm}\subsection{Approximation algorithms}
The combinatorial engine of our approximation algorithms is the following result.

\begin{theorem}\label{maintscomb}
There exists a function $f_{\ref{maintscomb}}\colon\mathbb{N}\to\mathbb{N}$ and an  algorithm that, given a graph $G$ and an integer $k,$ either outputs a subgraph of $G$ that is a handle wall or a crosscap wall of order $k$ or outputs a tree decomposition $(T,\beta),$ where every torso $G_{t}$, $t\in V(T)$, contains a set $X_{t}$ such that
\smallskip
\begin{enumerate}
\item  $G_{t}-X_{t}$ is planar,
\item  $G_{t}-X_{t}$ is a minor of $G,$  
\item  every adhesion of $t$ contains at most $f_{\ref{maintscomb}}(k)$ vertices and, among them, at most three do not belong to $X_{t},$ and 
\item $\bw(G_t)-\bw(G_{t}-X_{t})\leq f_{\ref{maintscomb}}(k)$.
\end{enumerate}
Moreover, $f_{\ref{maintscomb}}(k)=2^{\poly(k)}$ and the above algorithm runs in time $\Ocal(2^{2^{\poly(k)}}|V(G)|^3)$. 
\end{theorem}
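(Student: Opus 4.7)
The plan is to extract \cref{maintscomb} as a refined byproduct of the algorithm of \cref{w1rgol}, by recording the vertices deleted from each piece of the decomposition rather than discarding them.

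First, I would run the algorithm of \cref{w1rgol} on the input $(G,k)$. If it returns a handle wall or crosscap wall of order $k$, output it and stop. Otherwise, it returns a subgraph $G'\subseteq G$ with $\bw(G)-\bw(G')\leq f_{\ref{w1rgol}}(k)$ and a tree decomposition $(T',\beta')$ of $G'$ whose torsos are planar, are minors of $G'$, and have adhesion at most three. The next step is to open the algorithm (ultimately, the main structural theorem of \cite{thilikos2024excluding}) and expose the per-torso apex sets $X_t$ used to produce $G'$ from $G$: each $X_t$ is a vertex set of size at most $2^{\poly(k)}$ whose deletion is what renders the local piece planar. I would then set $T\coloneqq T'$ and $\beta(t)\coloneqq\beta'(t)\cup X_t$; routine bookkeeping verifies that $(T,\beta)$ is a tree decomposition of $G$ covering every vertex and every edge of $G$.

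The four properties then follow from this construction. Property~(1): $G_t-X_t$ coincides with (a subgraph of) the planar torso $G'_t$ of $(T',\beta')$, because $\beta(t)\setminus X_t=\beta'(t)$ and the clique completions of the adhesions of $(T,\beta)$, when restricted to non-apex vertices, recover the size-$\leq 3$ adhesions of $(T',\beta')$, whose completion is planar. Property~(2) is inherited from property~(b) of \cref{w1rgol}. Property~(3) holds because every adhesion of $(T,\beta)$ consists of a size-$\leq 3$ adhesion of $(T',\beta')$ together with a subset of $X_t\cup X_{t'}$: arranging $X_{t'}$ to meet $\beta'(t)$ only at the original adhesion (which the construction of \cite{thilikos2024excluding} makes possible), the non-apex vertices of the adhesion are exactly those three vertices, while the whole adhesion has size at most $2\cdot 2^{\poly(k)}+3\leq f_{\ref{maintscomb}}(k)$. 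Property~(4) is a direct application of \cref{apex_bw}: assuming $\bw(G_t-X_t)\geq 2$ (the degenerate case is trivial), we get $\bw(G_t)-\bw(G_t-X_t)\leq |X_t|\leq 2^{\poly(k)}$.

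The main obstacle is the first step: \cref{w1rgol} as stated does not expose the sets $X_t$, so the real work is to reprove it as producing a decomposition of $G$ itself with apex-augmented torsos rather than a decomposition of the apex-deleted subgraph $G'$. This amounts to a careful specialization of the structural theorem of \cite{thilikos2024excluding}: graphs excluding handle walls and crosscap walls of order $k$ decompose into pieces that embed in surfaces of bounded genus with bounded numbers of apices and bounded-depth vortices; the simultaneous exclusion of both wall types forces the underlying surface to be the sphere, so the entire non-planar contribution at each piece is captured by a bounded apex-plus-vortex set, which becomes $X_t$. A subtler point, the ``at most three outside $X_t$'' clause of~(3), requires that the decomposition of \cite{thilikos2024excluding} be chosen so that adhesion sets separate apex from non-apex contributions cleanly; this can be arranged by the standard trick of pushing all shared apex vertices into both incident torsos as part of the apex sets. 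The stated running time $\Ocal(2^{2^{\poly(k)}}|V(G)|^3)$ is inherited from \cref{w1rgol}, since our modification is purely in the output representation.
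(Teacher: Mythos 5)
Your proposal has a genuine gap, and it is located exactly where you claim your argument is ``a direct application of \cref{apex_bw}.'' The set $X_t$ is not, and cannot be chosen to be, of size bounded by $2^{\poly(k)}$. In the decomposition delivered by \cref{from_surfex}, $X_t$ must contain both the apex set $A_t$ (which is indeed of size $\leq f_{\ref{from_surfex}}(k)$) \emph{and} all internal vertices of the vortices. A vortex is a graph attached along a noose with boundary of size $n_i$, equipped with a path decomposition of width at most $w=f_{\ref{from_surfex}}(k)$ on a path of length $n_i$; its total number of vertices can be as large as $n_i(w+1)$, and $n_i$ grows with $|V(G)|$. So $|X_t|$ is unbounded, $\cref{apex_bw}$ does not apply, and property~(4) does not follow. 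This is not a bookkeeping issue one can patch by re-engineering the output of \cref{w1rgol}; it is the central difficulty the paper must address. (Also note that deriving \cref{maintscomb} from \cref{w1rgol} is circular as stated, since in the paper \cref{w1rgol} is proved \emph{from} \cref{maintscomb}; you implicitly acknowledge this by saying the ``real work is to reprove it,'' but you never supply that work.)

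The missing ingredient is the entire content of \cref{this_che_slopes}: the Ratcatcher duality between tangles and slopes (\cref{llkseferatcatc}) and its consequence \cref{wthiskslopes}, which shows that for a $2$-edge-connected planar $G'$ appearing in an almost spherical embedding of $G$ with breadth $b$ and width $w$, one has $\bw(G) - \bw(G') \leq 2b^2 w$ --- i.e., deleting all the (unboundedly many) vortex-internal vertices costs only an additive $2b^2 w$. The paper's argument works because, over a planar embedding, every small-order separation of $G'$ is realized by a noose/radial cycle, and such a cycle crosses each vortex boundary at most twice; this lets one push a tangle of $G$ down to a slope of the radial graph of $G'$ with only a $2b^2 w$ loss, without ever counting the internal vortex vertices. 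This step is specific to planarity; a ``multiplicative drop'' as in \cref{ope_tyop} is generic, but the additive drop needed here is not, and $\cref{apex_bw}$ alone cannot deliver it because $Z_t$ is not a bounded apex set. The paper also needs \cref{frt6y} to first make $G'_t$ $2$-edge-connected while preserving branchwidth and the almost spherical embedding, and \cref{remksiop} to handle the trivial case $\bw(G'_t)\leq 1$, before \cref{wthiskslopes} can be invoked; none of this appears in your proposal. In short, your reduction to $\cref{apex_bw}$ fails, and the actual proof requires the tangle-to-slope machinery that comprises most of \cref{sec_outline}.
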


We postpone the proof of \cref{maintscomb} to \cref{sec_outline}.
\cref{maintscomb}, is used for the proof of \cref{th_w1rgol}, as follows.

\medskip\noindent\textbf{\cref{th_w1rgol}} (\textsl{restated}). {\midnightblack{{There is a function $f_{\ref{th_w1rgol}}\colon\mathbb{N}\to\mathbb{N}$ and an algorithm that, given a graph $G$ and a non-negative integer $k,$ either outputs a  
handle wall or a crosscap  wall of $G$ of order $k$ or outputs a subgraph $G'$ of $G$ where $\bw(G)-\bw(G')\leq f_{\ref{th_w1rgol}}(k)$ and a tree decomposition $(T',\beta')$ of $G'$ such that, for every $t\in V(T'),$
\begin{itemize}
\item[(a)] the torso $G_{t}'$ is planar,
\item[(b)] the torso  $G_{t}'$ is a minor of $G',$ and 
\item[(c)] every adhesion set of $t$ has size at most three.
\end{itemize}
Moreover, $f_{\ref{th_w1rgol}}(k)=2^{\poly(k)}$ and the above algorithm runs in time $\Ocal(2^{2^{\poly(k)}}|V(G)|^3)$.}}

\begin{proof}%[Proof of \cref{th_w1rgol}.]
Consider a tree decomposition $(T,\beta)$ of $G$ where every torso $G_{t}$ contains a set $X_{t}$ such that conditions i), ii), iii), and iv) are satisfied.
By \cref{maintscomb}, such a decomposition can be computed in time $\Ocal(2^{2^{\poly(k)}}|V(G)|^3)$.
For each $t\in V(T),$ we set $G_{t}'\coloneqq G_{t}-X_{t}$.
Let $X\coloneqq \bigcup_{t\in V(T)}X_{t}$ and $G'\coloneqq G-X$.
We consider the tree decomposition $(T,\beta')$ of $G'$ where, for each $t\in V(T),$ $\beta'(t)=\beta(t)\setminus X_{i}$ (clearly $\beta'(t)=V(G'_{t})$).
The algorithm outputs $G'$ and $(T,\beta')$.
For every $t\in V(T),$ conditions (a), (b), and (c) hold for $(T,\beta')$ because conditions i), ii), and iii) of \cref{maintscomb} respectively hold for $(T,\beta')$.
It remains to prove that
\begin{eqnarray}
  \bw(G)-\bw(G')\leq f_{\ref{maintscomb}}(k). \label{lpopi}  
\end{eqnarray}

If $\bw(G)\leq f_{\ref{maintscomb}}(k),$ we directly have that $\bw(G)-\bw(G')\leq \bw(G)\leq f_{\ref{maintscomb}}(k)$ and~\eqref{lpopi} follows.

If $\bw(G)> f_{\ref{maintscomb}}(k),$ then, using \cref{maintscomb}.iii), we may apply \cref{alltog} for $G$ and $r=f_{\ref{maintscomb}}(k)$ and obtain that $\bw(G)\leq \max\{\bw(G_{t}) \mid t\in V(T)\}\leq^{\text{\cref{maintscomb}.iv)}}  \max\{\bw(G_{t}') \mid t\in V(T)\}
+f_{\ref{maintscomb}}(k)\leq^{\text{\change{(b)}{\cref{maintscomb}.iii)}}} \bw(G')+f_{\ref{maintscomb}}(k)$. This yields \eqref{lpopi} and the result follows by setting $f_{\ref{th_w1rgol}}(k)=f_{\ref{maintscomb}}(k)=2^{\poly(k)}$.%
\revthree{11. page 8, line 45: Theorem 2.11 ii) instead of (b)?}
\ans{Indeed. Fixed.}

\end{proof}

Using now \cref{th_w1rgol}, we can prove \cref{arst_cpe}.

\medskip\noindent \textbf{\cref{arst_cpe}} (\textsl{restated}).~{\midnightblack{{Branchwidth admits a polynomial-time additive approximation algorithm for every graph class that excludes as a minor a graph    $H_{1}\in \mathcal{E}_{1}$ and a graph $H_{2}\in \mathcal{E}_{2}$. 
In particular, there is a function $f_{\ref{arst_cpe}}\colon\mathbb{N}\to\mathbb{N}$ and an algorithm that, given as input two graphs  $H_{1}\in \mathcal{E}_{1}$ and $H_{2}\in \mathcal{E}_{2},$ and an $\{H_{1},H_{2}\}$-minor free graph $G,$ outputs a value $b$ where $b\leq \bw(G)\leq  b+f_{\ref{arst_cpe}}(k),$ where $k=|V(H_{1})|+|V(H_{2})|$. Moreover, $f_{\ref{arst_cpe}}(k)=2^{\poly(k)}$ and the algorithm runs  in time $\Ocal(2^{2^{\poly(k)}}|V(G)|^{3})$.}}

\begin{proof}%[Proof of \cref{arst_cpe}.]
By \cref{kessent_exl}, $G$ does not contain a handle wall or a crosscap wall of order 
\change{$k'=c|V({H}|)^2$}{$k'=c{k}^2$} as a subgraph.%
\revthree{12. page 8, line 52: there is no H in the statement of Theorem 1.1.}
\ans{We now set $k'=ck^2$, after applying \cref{kessent_exl}.}
Thus, using \cref{th_w1rgol}, we can find, in time $\Ocal(2^{2^{\poly(|V(H)|)}}|V(G)|^3)$, a subgraph $G'$ of $G$ where 
\begin{eqnarray}
\bw(G) & \leq & \bw(G')+f_{\ref{th_w1rgol}}(k\rred{'}) \label{onmoere}
\end{eqnarray}
and a tree decomposition $(T',\beta')$ of $G'$ such that, for every $t\in V(T'),$ conditions (a), (b), and (c) of \cref{th_w1rgol} hold. 
From condition (a) it follows that each torso $G_{t}', t\in V(T)$, is a planar graph.
Therefore we can use the algorithm of \cref{pr_rat_k} in order to compute the branchwidth of $G_{t}'$ in time $\Ocal(|G_{t}'|^3)$.
Our algorithm returns the value $b=\max \{\bw(G_{t}')\mid t\in V(T')\},$ that can be computed in time $\Ocal(|V(G')|)$.
We next claim that 
\begin{eqnarray}
\bw(G') & \leq& b+3 \label{thelastosp}
\end{eqnarray}
In order to prove~\eqref{thelastosp}, we distinguish two cases.

\noindent\emph{Case 1}:
$b>3$: In this case we may use property $(c)$ in order to apply \cref{alltog} for $G'$ and $r=3$ and obtain that $\bw(G')≤b<b+3$. 

\noindent\emph{Case 2}:
$b\leq 3$: Here, because of \cref{brwibw} and \cref{vocmoptreedec}, we have that $\bw(G')\leq \tw(G')+1\leq  \max \{\tw(G_{t}')\mid t\in V(T)\}+1\leq \max \{\lfloor \frac{3}{2}\bw(G_{t}')\rfloor+2\mid t\in V(T)\}= \lfloor \frac{3}{2}b\rfloor+2 \leq b+3$.  

We conclude that $b\leq^{\text{\cref{th_w1rgol}.b)}} \bw(G')\leq \bw(G)\leq^{\text{\eqref{onmoere}}} \bw(G')+f_{\ref{th_w1rgol}}(k\rred{'})  \leq^{\text{\eqref{thelastosp}}} b+3+f_{\ref{th_w1rgol}}(k\rred{'})$ and the theorem holds if we set $f_{\ref{arst_cpe}}=3+f_{\ref{th_w1rgol}}(k\rred{'})=2^{\poly(k\rred{'})}$.
\end{proof}

We now use \cref{th_w1rgol} for the EPTAS  of \cref{mainth}.

\medskip\noindent \textbf{\cref{mainth}} (\textsl{restated}). \textit{\midnightblack{Branchwidth admits an EPTAS for every graph class that excludes as a minor a graph $H_{1}\in \mathcal{E}_{1}$ and a graph $H_{2}\in \mathcal{E}_{2}$.
In particular, there is a function $f_{\ref{mainth}}\colon\mathbb{N}\to\mathbb{N}$ and an algorithm that, given as input two graphs  $H_{1}\in \mathcal{E}_{1}$ and $H_{2}\in \mathcal{E}_{2},$ and an $\{H_{1},H_{2}\}$-minor free graph $G,$ outputs a value $b$ where $b\leq \bw(G) \leq (1+\epsilon)b$. Moreover, the algorithm runs in time  $\Ocal(2^{2^{\poly(k)}}|V(G)|^{3}+f_{\ref{mainth}}(\epsilon,k)|V(G)|)$, where  $k=|V(H_{1})|+|V(H_{2})|$ and $f_{\ref{mainth}}(\epsilon,k)=2^{\poly(\frac{1}{\epsilon}\cdot 2^{\poly(k)})}$.
}}

\begin{proof}%[Proof of \cref{mainth}.]
Let $\epsilon> 0$.
We give an $(1+\epsilon)$-approximation algorithm as follows.
First, we run the algorithm of \cref{arst_cpe} and obtain some value $b$ such that $b\leq \bw(G)\leq b+f_{\ref{arst_cpe}}(k)$ where  $k=|V(H_{1})|+|V(H_{2})|$.
This algorithm runs in $\Ocal(2^{2^{\poly(k)}}|V(G)|^{3})$ steps.

If $f_{\ref{arst_cpe}}(k)/b<\epsilon,$ then $\bw(G)\leq b+f_{\ref{arst_cpe}}(k)< (1+\epsilon)b$.
The algorithm outputs $b$.
 
If  $f_{\ref{arst_cpe}}(k)/b\geq  \epsilon,$ this means that $\bw(G)\leq b+f_{\ref{arst_cpe}}(k)\leq f_{\ref{arst_cpe}}(k)/\epsilon+f_{\ref{arst_cpe}}(k)=(1/ \epsilon+1)\cdot f_{\ref{arst_cpe}}(k),$ and we use \cref{pr_bodl_linear} in order to output the exact value of $\bw(G)$ in time $\Ocal(f_{\ref{pr_bodl_linear}}((1/ \epsilon+1)\cdot f_{\ref{arst_cpe}}(k))\cdot |V(G)|)$.

In both cases, we output a value $b^*$ where $b^*\leq \bw(G) \leq (1+\epsilon)b^*,$ i.e., $b^*$ is an $(1+\epsilon)$-factor approximation of $\bw(G)$.
The result follows if we set $f_{\ref{mainth}}(\epsilon,k)=f_{\ref{pr_bodl_linear}}((1/ \epsilon+1)\cdot f_{\ref{arst_cpe}}(k))=2^{\poly(\frac{1}{\epsilon}\cdot 2^{\poly(k)})}$.
\end{proof}

\section{Proof for \cref{maintscomb}}\label{sec_outline}

In this section we give the proof for \cref{maintscomb}. For this we introduce some concepts from graph minors.

\subsection{Almost spherical embeddings}
Almost embeddings in surfaces form a key building block for the Graph Minors Structure Theorem of Robertson and Seymour {\cite{robertson2003graph}}.
For the purposes of our proofs, we only need the version of this concept when the  surface is the sphere $\Sbbb^2=\{(x,y,z)\mid x^2+y^2+z^2=1\}$.

A {\emph{spherical embedding}}%
\revone{10. Page 9, line 48+: I find this definition of an embedding to be quite unorthodox, and problematic in places (why not to use the same one as classical [Mohar-Thomassen]?). In what topological sense are your components ``connected''? Is this definition really giving edges as we know them even in boundary cases?}
\ans{We now explain that we use the notion of ``drawings in a surface'' Kawarabayashi, Wollan, and Thomas \cite{KawarabayashiTW20Quicklyexcluding}, restricted on spheres. This we believe, is more suitable for stating our results and proofs because we adapt their notion of $\Sigma$-decompositions which is based on their notion of drawings. We added some text to explain the origin oh these definitions.}
is  a pair  $(\Gamma, U)$ where  $U\subseteq \Gamma \subseteq \Sbbb_{0}$, $\Gamma$ is a closed set, $U$ is a finite set of points of $\Sbbb^2$, and each connected \rred{arc-wise} component of $\Gamma \setminus U$ is homeomorphic to the open set $(0,1)$.

We refer to the points in $U$ as the \emph{points} of  $(\Gamma, U)$
and to the connected components of $\Gamma \setminus U$ as the  \emph{arcs} of  $(\Gamma, U)$.
Given an arc $a$ of $(\Gamma, U)$, we define its endpoints as the points of  $\overline{a}\setminus a$ (we use the notation $\overline{a}$ for the 
closure of a set $a$). 
We say that  $(\Gamma, U)$ is a \emph{spherical embedding} of a graph $G$ if the graph $(U,\{xy\mid x \text{ and } y \text{ are the endpoints of  some arc in } (\Gamma, U) \}$ is isomorphic to $G$.
For our purposes, we consider only spherical embeddings where each arc has exactly two endpoints. 
That way we may consider spherical embeddings of graphs with multiple edges but not with loops.
Clearly, there is an one-to-one correspondence between the vertices/edges of $G$ and the points/arcs of $(\Gamma, U)$.
The set of faces of $(\Gamma, U)$ is the set of all the connected components of $\Sbbb^2\setminus \Gamma $ and is denoted by $F(\Gamma, U)$.

\rred{The concept of spherical embedding that we introduce here is based on the notion of ``drawings in a surface'' as used in their proof of the Graph Minor Structure Theorem by Kawarabayashi, Wollan, and Thomas \cite{KawarabayashiTW20Quicklyexcluding}.
In the context of graph minors one often works with the concept of an ``almost embedding'' to capture unavoidable impurities caused by low-order separations or ``vortices''.
Below we provide a simplified adaptation of the notion of ``$\Sigma$-decompositions'' as introduced by Kawarabayashi, Wollan, and Thomas \cite{KawarabayashiTW20Quicklyexcluding} based on the notion of (spherical) embeddings as introduced above.}

Given a spherical embedding $(\Gamma, U)$ of a graph $G$  and given a subset $N$ of the sphere that is homeomorphic to the \change{cycle}{circle}%
\revthree{16. page 11, line 56: cycle ~$\longrightarrow$~ circle}
\ans{Done.}
  $\{(x,y)\mid x^{2}+y^{2}=1\}$, we say that $N$ is a \emph{noose} of $(\Gamma, U)$ if $N\cap \Gamma \subseteq U$, i.e., ,  $N$ intersects the spherical embedding of $G$ only \change{to}{in}%
\revone{11. Page 10, line 6: ``intersects the spherical embedding of G only IN vertices'' } 
\ans{Fixed.}
vertices. We denote this set of vertices by $V_{G}(N)$.

\begin{definition}[Almost spherical embedding]
\label{defonfoe}
An \emph{almost spherical embedding} of a graph $G$ is a quadruple 
$(G',\Gamma ,U,\Bcal,\Pcal)$
where 
\begin{enumerate}
  \item   $G'$ is a subgraph of $G$ and  $\Bcal=\{B_1,\dots,B_{b}\}$ are  subgraphs of $G,$ such that $G=G'\cup\cupall \Bcal.$
  \item  $(\Gamma, U)$ is a spherical embedding of $G'$.
  \item 
  for each $i\in[b],$ there exists a noose $N_{i}$ of $(\Gamma, U)$  where $V(G')\cap V(B_i)=V_{G'}(N_i)=\{v^i_1,\dots,v^i_{n_i}\}$,  ordered according to the appearance of the vertices in $V_{G'}(N_i)$ while traversing along $N_{i}$  and such that one, say $\Delta_{i}$, of the two open disks bounded by $N_i$ does not contain any arc or point of $(\Gamma, U)$. Moreover, it holds that for $1\leq i<j\leq b$, $\Delta_{i}\cap \Delta_{j}\subseteq U$.
  \item   $\Pcal=\{(P_i,\beta_i)\mid i\in[b]\}$, where each 
  $(P_i,\beta_i)$  is a  path decomposition\footnote{A \emph{path decomposition} of a graph $G$ is a tree decomposition $(P,\beta)$ for $G$ where $P$ is a path.} of  $B_i$ such that
  \begin{enumerate}
    \item  $P_i$ is a path on the vertices $p^i_1, \dots, p^i_{n_i},$ appearing in this order, and
    \item  for each $j\in [n_i]$ we have $v^i_j\in \beta_i(p^i_j).$
  \end{enumerate}
\end{enumerate}
We call the graphs $B_i,$ $i\in [b],$ the \emph{vortices} of $(G',\Gamma ,U,\Bcal,\Pcal)$ and we call  the noose $N_i$ (resp. open disk $\Delta_{i}$) the \emph{boundary noose} (resp. \emph{disk}) of the vortex $B_i, i\in[b]$.
We call the vertices in $B_{i}\setminus V_{G'}(N_i)$ (resp. $V_{G'}(N_i)$) \emph{internal} (resp. \emph{boundary}) vertices of the vortex $B_{i}$.  
The \emph{width} of an almost spherical embedding $(G',\Gamma ,U,\Bcal,\Pcal)$ of $G$ is the maximum width over the path decompositions in $\Pcal$ and its \emph{breadth} is $b$, i.e., is the number of vortices.

For every vertex $v\in V(G'),$ we define $I_{v}$ as the set containing  every index $i\in[b]$
such that $v=v_j^i,$ for some $j\in[n_i]$, i.e., $I_{v}$ contains the indices 
of all the vortices that have $v$ as a boundary vertex.
If $I_{v}$ is empty, then we call $v$ \emph{poor}, otherwise we call it \emph{rich}.
\end{definition}
Notice also that $|I_v|\leq b$. \change{Notice that}{ Also,}  if $v$ is poor, then $V_{v}=\{v\}$.
For every $v\in V(G')$ we {define}%
\revthree{13. page 10, line 33: ``Notice that...''  should be after the definition of $V_v$.} 
\ans{Done.}
\begin{eqnarray}
V_{v} & = & \{v\}\cup\cupall\{\beta_i(v_{j}^{i})\mid i\in I_{v} ~\mbox{and $v=v_j^i$}\}.\label{dfkol_dm}
\end{eqnarray}

\rred{An \emph{$r$-dominating set} in a graph $G$ is a set $S\subseteq V(G)$ 
where every vertices of $G$ are within distance at most $r$ from some vertex of $S$.
We need the following result that was proved  in \cite[Theorem 2]{Thilikos16cover}.

\begin{proposition}
\label{prop_dom_planar}
If $G$ is a planar graph containing an $r$-dominating set of size $≤b$, 
then $\bw(G)≤(2r+1)\sqrt{4.5\cdot  b}$.
\end{proposition}
}

For our proofs we need to make the following observation that follows easily from known results.
\begin{lemma}
\label{ope_tyop}
There exists a function $f_{\ref{ope_tyop}}:\mathbb{N}^2\to\mathbb{N}$
such that if  $(G',\Gamma ,U,\Bcal,\Pcal)$ is an almost spherical embedding of a graph $G$ of width at most $w$ and breadth $b$ and where $\bw(G')\leq 1$, then  $\tw(G)\leq f_{\ref{ope_tyop}}(w,b)$.
\end{lemma}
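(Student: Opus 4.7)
The plan is to exploit the rigidity imposed by $\bw(G')\le 1$ to build an explicit tree decomposition of $G$ from the forest structure of $G'$ and the vortex path decompositions. First I would apply \cref{brwibw} to $G'$: since $\bw(G')\le 1$, we have $\tw(G')+1\le 2$, hence $\tw(G')\le 1$, so $G'$ is a forest and admits a tree decomposition $(T',\beta')$ of width at most $1$, in which every bag has at most two vertices.

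The construction of the desired tree decomposition of $G$ then proceeds by incorporating the path decompositions $(P_i,\beta_i)$ of the vortices $B_i$ one at a time into $(T',\beta')$. For each vortex $B_i$, the boundary vertices $v^i_1,\dots,v^i_{n_i}$ appear in order along $P_i$ (with $v^i_j\in \beta_i(p^i_j)$) and, in the almost spherical embedding, they appear in the same cyclic order along the noose $N_i$. Using the planar structure of $G'$ on the sphere, I would choose a walk in the current tree that visits the boundary vertices in the correct order and with the property that each boundary vertex is visited in a contiguous subwalk---for instance, a walk induced by a DFS of $G'$ whose branching at each vertex respects the cyclic order of the spherical embedding. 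Subdividing the current tree along this walk (duplicating each revisited node together with its current bag) turns the walk into a simple path, along which I would insert the bags $\beta_i(p^i_j)$ in order, enlarging each bag accordingly.

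After processing all $b$ vortices, each bag of the resulting decomposition contains at most two vertices inherited from $(T',\beta')$, plus at most $b$ contributions of at most $w+1$ vertices each---one per vortex whose inserted path passes through this bag. This yields a bound of the form $\tw(G)\le b(w+1)+O(1)$, giving the required function $f_{\ref{ope_tyop}}(w,b)$. The hard part will be showing that the walks can be chosen so that, after each subdivision-and-insertion step, every vertex of $G$ still occupies a connected subtree of the current tree---in particular, that each boundary vertex of each vortex is visited contiguously by the chosen walk; this is where the spherical embedding and the forest structure of $G'$ are essential, since they force the cyclic order on the noose to be compatible with a planar DFS of $G'$. As a cleaner fallback, one may instead invoke classical results from the graph minors literature on the treewidth of almost-embeddable graphs, which directly bound $\tw(G)$ by a function of $w$ and $b$ whenever the host graph has bounded treewidth.
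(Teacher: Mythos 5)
Your plan is genuinely different from the paper's, and the gap you flag yourself is real and more serious than you allow. The paper does not try to thread the vortex path decompositions through a tree decomposition of the forest $G'$; it first uses~\cite[(4.4).(ii)]{robertson1991graph} to observe that $\bw(G')\le 1$ forces every component of $G'$ to be a \emph{star} (not merely a tree — and note that applying the relaxed~\cref{brwibw} with $\bw(G')\le 1$ only gives $\tw(G')\le 2$, not $\tw(G')\le 1$ as you claim; one needs the original Robertson--Seymour inequality). It then augments $G'$ to a graph $\tilde G'$ by adding the noose cycles $v^i_1 v^i_2\cdots v^i_{n_i}v^i_1$ inside each $\Delta_i$ (keeping planarity), bounds $\bw(\tilde G')=O(\sqrt{b})$ by a coverability/bidimensionality argument that exploits the star structure, and finally replaces every bag $\beta(t)$ of a width-$O(\sqrt b)$ tree decomposition of $\tilde G'$ by $\bigcup_{v\in\beta(t)}V_v$, invoking~\cite[Lemma~5.8]{DemaineFHT05Subexponential}. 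Adding the noose edges is exactly what makes this global bag-replacement produce a valid tree decomposition: consecutive boundary vertices $v^i_j,v^i_{j+1}$ are forced into a common bag, so the set of nodes receiving a given internal vortex vertex is connected.

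Your walk-and-subdivide construction is trying to get the same connectivity property locally, which is where the difficulties pile up. When $G'$ is disconnected a single noose may alternate between components, so its cyclic order is not realized by any DFS walk of $G'$; and once you have subdivided and duplicated nodes to straighten the first vortex's walk, the tree is no longer a tree decomposition that tracks the embedding, so there is no reason a second noose's cyclic order now maps to a walk that visits each vertex's (now spread-out, duplicated) subtree contiguously. Since two disks $\Delta_i,\Delta_j$ may share boundary vertices (only $\Delta_i\cap\Delta_j\subseteq U$ is required), this interaction is not vacuous; making it work would need a separate laminarity argument for nooses on the sphere that you do not supply. The ``cleaner fallback'' you mention — cite a general almost-embeddability result over a bounded-treewidth host — is essentially what the paper does, and is the better route here.
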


\begin{proof}
It is easy to see that if $\bw(G')\leq 1$, then every connected component
of $G'$ is a star, i.e., , a tree of diameter at most two (see \cite[(4.4).(ii)]{robertson1991graph}).
We construct a new graph $\tilde{G}'$ by adding to $G'$, for every $i\in[b]$\rred{,} all the edges of $\{v^i_1v^i_2,\dots,v^i_{n_i-1}v^i_{n_i},v^i_{n_i}v^i_{1}\}$, that are not already present in $G'$.
We also construct $\tilde{G}$ by adding the same edges in $G$.

Our first step is to bound the treewidth of $\tilde{G}'$.
For this we add, for every $i\in[b]$, a new vertex $v_{i}^{\mathsf{new}}$ an make it adjacent to the vertices of $\{v^i_1,\dots,v^i_{n_i}\}$ and denote by $\tilde{G}^{\prime+}$ the resulting graph.
Observe that $\tilde{G}^{\prime+}$ is planar and that every vertex of some of its connected components that is not a star is within distance at most three
\revthree{14. page 10, line 52: I don't understand why we need three and two is not enough.} 
\ans{We went over this part and could not agree if two is enough of not. What we can agree on is that three defiantly works and since this is a tiny constant, we decided to keep it at three.}
from some vertex of $\{v^i_1,\dots,v^i_{n_i}\}$.
\revone{12. Page 10, line 54: I suggest to state what is in ``\cite[Theorem 2]{Thilikos16cover}' .} 
\ans{Done.}
{This}, according to \change{\mbox{\cite[Theorem 2]{Thilikos16cover}}}{\cref{prop_dom_planar}}, implies that 
\change{$\bw(\tilde{G}^{\prime})\leq \bw(\tilde{G}^{\prime+})\leq  {5}\sqrt{4.5\cdot  b}$}{
$\bw(\tilde{G}^{\prime})\leq \bw(\tilde{G}^{\prime+})\leq  {7}{5}\sqrt{4.5\cdot  b}$}
.

By \cref{brwibw} there exist a a tree decomposition $(T,\beta)$  of $\tilde{G}^{\prime}$ of width at most $k:=\frac{21}{2}\sqrt{4.5\cdot  b}+2$.

We  build a  tree decomposition $(T,\hat{\beta})$ of $\tilde{G}$ by recycling an argument that appeared in \cite{DemaineFHT05Subexponential} as Lemma 5.8. 
We set, for every $t\in V(T)$, $\hat{\beta}(t)\coloneqq\bigcup_{v\in \beta(t)}V_{v}$. By \eqref{dfkol_dm}, for every $v\in V(G')$, it holds that $|V_{v}|\leq w\cdot b+1$.
Therefore, $(T,\beta')$ has width at most $(k+1)(w\cdot b+1)-1$.
We now set $f_{\ref{ope_tyop}}(w,b)=(k+1)(w\cdot b+1)-1$ and conclude that $\tw(G)\leq f_{\ref{ope_tyop}}(w,b)$ as required.
\end{proof}

We now present the following result that can be {derived}
\revtwo{1. on page 11, line 5, \cite[Lemma 5.5]{thilikos2024Wexcluding} is referenced, but \cite{thilikos2024Wexcluding} does not contain a lemma 5.5. It seems that Theorem 7.1 might have been meant.}
\ans{This was an artifact from a problem we had with arxiv when we split the paper under \cite{thilikos2024Wexcluding} in two. This problem is now be resolved as our appeal was successful and we now cite the correct version. The paper is available at \href{https://arxiv.org/abs/2601.19230}{https://arxiv.org/abs/2601.19230}.}
from \cite[Lemma 5.5]{thilikos2024Wexcluding} for the case where the surface is a sphere. 
\revone{13. Page 11, line 5: Again, I suggest to state what is in ``\cite[Lemma 5.5]{thilikos2024Wexcluding}''  and restate your corollary.}
\ans{We would like to avoid restating this theorem as its full statement requires the full terminology of the Graph Minor Structure Theorem which would blow up the length of this paper by possibly several pages with comparatively small benefit.}
\begin{proposition}
\label{from_surfex}
There exists a function $f_{\ref{from_surfex}}:\mathbb{N}\to\mathbb{N}$ and an algorithm that, given a graph $G$ and  integer $k$, either finds a subgraph of $G$ that is either a handle wall or a crosscap wall of order $k$ or outputs a tree decomposition $(T,\beta)$ of $G$ of adhesion less than $f_{\ref{from_surfex}}(k)$ such that, for every $t\in V(T),$ assuming that $G_t$ is the torso of $G$ at $t\in V(T)$, it holds that either 
 \begin{itemize}
     \item  $|V(G_{t})|\leq f_{\ref{from_surfex}}(k)$ or 
     \item   there exist a set $A_t\subseteq\beta(t)$, called \emph{apex set},  where $|A_t|\leq f_{\ref{from_surfex}}(k)$ such that the graph ${G}_{t}'\coloneqq G_{t}-A_t$ has an almost spherical embedding  $(G_t',\Gamma _t,U_{t},\Bcal_t,\Pcal_t)$ of width at most $f_{\ref{from_surfex}}(k)$ and breadth at most $f_{\ref{from_surfex}}(k),$ such that 
  \begin{itemize}
  \item[B1.] for every neighbor $\hat{t}$ of $t$ in $T$, $G_t'\cap G_{\hat{t}}'$ is a complete graph  on at most three vertices and
  \item[B2.] $G_t'$ is a minor of $G.$
  \end{itemize}
 \end{itemize} 
 Moreover, $f_{\ref{from_surfex}}(k)=2^{\poly(k)}$ and the above algorithm runs in time $\Ocal(2^{2^{\poly(k)}}|V(G)|^{3})$. 
 \end{proposition}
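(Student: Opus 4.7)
The plan is to invoke the structural result \cite[Lemma 5.5]{thilikos2024excluding} and specialize its conclusion to the spherical case. That lemma, run on $(G,k)$ with appropriate parameters, outputs either a subgraph of $G$ witnessing large orientable topological complexity (a handle wall of order $k$), or a subgraph of $G$ witnessing large non-orientable topological complexity (a crosscap wall of order $k$), or a tree decomposition $(T,\beta)$ of $G$ of adhesion at most $f_{\ref{from_surfex}}(k)$ whose torsos are either bounded in size or admit an almost embedding in some surface $\Sigma_t$, together with apex sets $A_t$, vortices $\Bcal_t$, and path decompositions $\Pcal_t$ of width and breadth bounded by $2^{\poly(k)}$. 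If either of the first two outcomes occurs we return the corresponding wall and terminate.

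Suppose the third outcome holds. The first order of business is to prove that each surface $\Sigma_t$ arising in the almost embeddings can be taken to be $\Sbbb^2$. Suppose, for contradiction, a torso $G_t - A_t$ were almost-embedded in a surface of positive orientable genus. Inside the part of $G_t - A_t$ that lies in an orientable handle of $\Sigma_t$, the standard cylindrical-grid extraction on embedded graphs produces a large subdivision of $\mathscr{W}^{\texttt{H}}_{k}$; this subdivision can be pulled back through the bounded-width vortex path decompositions and through the branches of $T-t$ into a handle wall subgraph of $G$ of order $k$, contradicting the assumption that we were in the third outcome. The symmetric argument using $\mathscr{W}^{\texttt{C}}_{k}$ rules out positive non-orientable genus. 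Hence each $\Sigma_t$ is the sphere and the almost embedding is spherical, yielding the quadruple $(G_t',\Gamma_t,U_t,\Bcal_t,\Pcal_t)$ required by \cref{defonfoe}.

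Finally, I verify B1 and B2. For B2, the torso $G_t$ is built from $G[\beta(t)]$ by turning each adhesion set into a clique, and every added clique edge is realized by a path of $G$ running through the branch of $T-t$ on the other side of the adhesion; contracting these paths simultaneously exhibits $G_t'$, and therefore $G_t-A_t$, as a minor of $G$. For B1, I postprocess $(T,\beta)$ so that for every edge $t\hat{t}$ of $T$ the intersection $G_t' \cap G_{\hat t}'$ is a clique on at most three vertices. This uses a standard clique-sum cleanup: in the absence of handle and crosscap walls, the sphere-embeddable piece is essentially $K_5$-minor free after deleting apex vertices, so any adhesion set that meets $V(G_t')$ in more than three vertices can be split by inserting additional separator bags whose torsos fall into the bounded-size case of the lemma, without violating the bounds on apex size, width, or breadth. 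The overall running time $\Ocal(2^{2^{\poly(k)}}|V(G)|^{3})$ is inherited from \cite{thilikos2024excluding}, the postprocessing contributing only polynomial overhead.

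The main obstacle is the B1 cleanup: enforcing that the cross-torso intersections are cliques of size at most three, while simultaneously preserving spherical embeddability, the apex-size bound, the vortex path decompositions, and the minor property B2. This requires careful bookkeeping of how vortex boundary vertices and apex vertices may participate in adhesions between adjacent torsos, and is where the bulk of the technical work lies.
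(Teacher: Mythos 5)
Your approach matches the paper's: the paper itself gives no detailed proof of this proposition and simply states that it ``can be derived from \cite[Lemma 5.5]{thilikos2024excluding} for the case where the surface is a sphere,'' and your proposal likewise grounds the result in that same external lemma. However, there are two problems in the way you try to flesh out the derivation.

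First, your middle step --- arguing by contradiction that each surface $\Sigma_t$ produced by the external lemma must be a sphere, by extracting a large handle or crosscap wall from any positive-genus piece and ``pulling it back'' through the vortices and branches of $T-t$ --- misreads how the cited lemma is structured. Lemma 5.5 of \cite{thilikos2024excluding} is parametrized by a target surface; the paper invokes it for the sphere specifically, so the torsos are spherical by construction, and the first two outcomes (handle wall / crosscap wall) are precisely the obstructions to spherical almost-embeddability produced by that instantiation. There is nothing to prove by contradiction, and the argument you sketch (pulling a subdivision back through bounded-width vortex path decompositions) does not obviously yield a subdivision of $\mathscr{W}^{\texttt{H}}_k$ or $\mathscr{W}^{\texttt{C}}_k$ as a \emph{subgraph} of $G$, since vortices need not preserve subdivisions.

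Second, your treatment of B1 and B2 is the genuinely weak point, and you acknowledge as much. For B2 you assert that clique-edges in the torso can always be realized by paths through the far side of the adhesion, but this requires a linkedness property of the tree decomposition that is not automatic; for B1 your clique-sum ``cleanup'' hinges on splitting oversized adhesion sets meeting $V(G_t')$ into extra bags ``of bounded size'' without disturbing apex counts, vortex widths, breadths, or spherical embeddability, and you give no mechanism for doing so. The paper sidesteps all of this by taking B1 and B2 as part of what \cite[Lemma 5.5]{thilikos2024excluding} already guarantees. If you want a self-contained derivation rather than a citation, you would need to either quote the precise form of Lemma 5.5 (including its linkedness/minor guarantees) or supply the missing combinatorial argument, neither of which your sketch currently does.
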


We stress that \cref{from_surfex} is a special case of the main structural result in \cite{thilikos2024Wexcluding} that provides tree decompositions of spherical embeddings instead of similar decompositions for general surfaces. 
Also \change{a}{an}%
\revthree{15. page 11, line 26: ``Also a improved...''  ~$\longrightarrow$~ ``Also an improved...'' }
\ans{Fixed.}
 improved version of \cref{from_surfex} where $f_{\ref{from_surfex}}(k)=\poly(k)$
can be derived by the recent results of \cite{GorskyW25Polynomial} with an algorithm that runs in $\Ocal(2^{\poly(k)}{|V(G)|^6})$.
It also follows from the results in \cite{Thilikos2025TheGraph} that 
\cref{from_surfex} is tight in the sense that no such tree decomposition is possible for graphs containing a large handle wall or a crosscap wall. 

\begin{remark}
\label{remksiop}
In \cref{from_surfex} we may also assume that, in the second outcome, we also have 
\begin{itemize}
\item[] 
\begin{itemize}
\item[B3.] $\bw(G_{t}')\geq 2$.
\end{itemize}
\end{itemize}
\end{remark}
Indeed, for the $t$'s where $\bw(G_t')\leq 1$, we may use \cref{ope_tyop}, in order to construct a tree decomposition of $G_{t}$ of adhesion $\leq f_{\ref{ope_tyop}}(q)$ where each torso has at most $f_{\ref{ope_tyop}}(q)+1$ vertices, assuming that $q=f_{\ref{from_surfex}}(k)$.
Then we use these decompositions in order to transform the tree decomposition of  \cref{from_surfex}, where we may update $f_{\ref{from_surfex}}(k)$ to be $f_{\ref{ope_tyop}}(q)$.
This is possible because each adhesion set $X$ of $\beta(t)$ is a clique in $G_t$ and thus, the tree decomposition constructed above must have some bag which fully contains $X$.

\vspace{-0mm}\subsection{From tangles to slopes}
\label{this_che_slopes}

One may already see that the exclusion of handle walls and crosscap walls as subgraphs implies a decomposition into graphs that are ``almost'' planar (see \cref{from_surfex}).
Towards proving \cref{maintscomb}, we need to show that the the removal of the additional infrastructure of the apices and the internal vertices of the vortices, does not drop the branchwidth more than an additive constant.
For this we need the concept of slopes that is the min-max analogue of branchwidth (especially) for planar graphs. 

Let $(\Gamma, U)$ be a spherical embedding of a graph $G$.
Given a cycle $C$ of $G$, observe that the union $\Gamma_{C}$ of all those points and arcs of $(\Gamma,U)$ which correspond to the vertices and edges of $C$ is homeomorphic to the cycle $\{(x,y)\mid x^{2}+y^{2}=1\}$.
Therefore $\Sbbb\setminus \Gamma _{C}$ has two components that are both open disks of $\Sbbb$.
We say that the closures of these two open disks are the \emph{closed disks of $\Sbbb^2$ bounded} by $C$ in  $(\Gamma,U)$.
For simplicity, given a spherical embedding $(\Gamma,U)$ of $G$, we do not make any distinction between a cycle $C$ of $G$ and the closed curve $\Gamma_{C}$ in $(\Gamma,U)$ and we refer to both of them as the cycle $C$.

\begin{definition}[Slopes]
\label{def_slopes}
Let $(\Gamma, U)$ be a spherical embedding of a graph $G$   and let $k$ be a positive \rred{even} integer.
A \emph{slope} in $(\Gamma, U)$ of \emph{order} {$k/2$}%
\revone{14. Page 12, line 6: Do you expect $k/2$ to be an integer or not?} 
\ans{Yes. We now clarify this.}
is a function $\mathsf{ins}$ that maps each cycle $C$ of $G$ of length $<k$ to one of the two closed disks of $\Sbbb^2$ bounded by $C,$ such that
\begin{itemize}
\item[S1.] if $C$ and $C'$ are cycles of length $<k,$ and $C\subseteq \mathsf{ins}(C'),$ then $\mathsf{ins}(C)\subseteq \mathsf{ins}(C'),$
\item[S2.] if $P_1,P_2,P_3$ are  paths of $G$ joining two distinct vertices $x$ and $y$ and the three cycles $P_{1}\cup P_{2},$ $P_{2}\cup P_{3},$ and $P_{3}\cup P_{1}$ have length $<k,$ then 
$$\mathsf{ins}(P_{1}\cup P_{2})\cup\mathsf{ins}(P_{2}\cup P_{3})\cup\mathsf{ins}(P_{3}\cup P_{1})\neq \Sigma .$$
\end{itemize}
\vspace{-1mm}

We also say that a slope is \emph{uniform} if 
\begin{itemize}
\item[S3.] for every face $f\in F(\Gamma, U),$ there is a cycle of $G$ with length $<k$ such that $f\subseteq\mathsf{ins}(C)$.
\end{itemize}
\end{definition}

A graph is $2$-edge connected if it is connected and bridgeless.
Let $(\Gamma, U)$ be a spherical embedding of a  $2$-edge connected graph $G$. 
Notice that every face $f$ of $F(\Gamma, U)$ defines \change{a}{some} {cyclic}%
\revthree{17. page 12, line 21: if $\pi(f)$ is cyclic, then there are two options, we should choose one.} 
\ans{The choice of the ordering can be taken arbitrarily.}
ordering $\pi (f)$ of its incident vertices, possibly with repetitions, following the order that the corresponding points appear on the boundary of $f$.
A vertex $v$ incident to $f$ {appears}%
\revone{15. Page 12, line 23: ``A vertex $v$ incident to $f$ appears in $\pi(f)$ as many times as the number of connected components of $G - v$'' - this is not true in general, e.g., when $v$ repeats in several faces around $v$.} 
\ans{We believe what we write here is correct for planar $2$-edge-connected graphs. However, we did not understand how the occurrence of $v$ in faces other than $f$ can have an impact on $\pi(f)$ besides the one created via the components of $G-v$.}
in $\pi (f)$ as many times as the number of connected components of $G-v$.
We call this number the {\emph{multiplicity}} of $v$ in $f$ and we denote it by $\mu _f(v)$.

The \emph{radial graph} of $(\Gamma, U)$ is the multigraph $R_{G}$ that has $V(G)\cup F(\Gamma, U)$ as its vertex set and that contains as edges every pair $vf$, where $v$ is a vertex incident to the face $f$, with multiplicity $\mu _f(v)$. 
Notice that $R_{G}$ has a spherical embedding $(\Gamma _R,U_R)$ where 
\begin{itemize}
\item    each point of $U_R$ corresponding to a vertex $v\in V(G)$ is the point of $U$ 
corresponding to $v$,  
\item  each point of $U_R$ corresponding to a face $f$ of $(\Gamma, U)$ is a point of $f$, and  
\item each arc between a point $x$ corresponding to a vertex $v\in V(G)$ and a point $y$ corresponding to a face $f\in F(\Gamma, U)$ is a subset of $f$.
Moreover there are $\mu _f(v)$ such arcs.
\item  if $a_{1},\ldots,a_{p},a_{1},$ are the arcs with $y$ as a common endpoint where $y$ is the point corresponding to face $f$, arranged following the cyclic ordering that they appear around $y$, and $x_{1},\ldots,x_{p},x_{1}$ are the other endpoints of these arcs, then $x_{1},\ldots,x_{p},x_{1}$ is the same cyclic ordering  as $\pi (f)$.
\end{itemize}

We stress that the notion of a radial graph can be defined also without the $2$-edge connectivity requirement.
However, we prefer to avoid this because the definition becomes more technical and requires the use of loops, that we prefer to avoid.
Also, the $2$-edge connectivity implies that  $R_{G}$ is a simple graph and that  all faces of $(\Gamma _R,U_R)$ are incident to {exactly \textsl{four}}%
\revthree{18. page 12, line 42: four ~$\longrightarrow$~ two?} 
\ans{In an $2$-edge connected radial graph, all cases are ``squares''.}
vertices of $R_G$ that are also vertices of $G$.
Moreover, the edges of $G$ bijectively correspond the faces of $R_{G}$.

\begin{proposition}[\cite{SeymourT94callr}]
\label{llkseferatcatc}
Let $k$ be a positive integer and let $(\Gamma, U)$ be a spherical embedding of some $2$-edge connected graph $G$.
Then $G$ has a tangle of order $k$ if and only if $(\Gamma _R,U_R)$ has a slope of order $k$.
\end{proposition}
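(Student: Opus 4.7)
The plan is to translate between the two objects via the classical planar duality between vertex separators in $G$ and cycles of the radial graph $R_{G}$. Since $R_G$ is bipartite with parts $V(G)$ and $F(\Gamma, U)$, every cycle $C$ of $R_G$ has even length $2\ell$, alternating between $\ell$ vertices of $G$ and $\ell$ faces. Viewed inside the sphere, $C$ is a noose of $(\Gamma, U)$ passing through exactly $\ell$ vertices, and therefore bounds two closed disks $D_{1},D_{2}$. Letting $A_{i}\coloneqq\{v\in V(G)\mid v\in D_{i}\}$, the pair $(A_{1},A_{2})$ (and its reverse) is a separation of $G$ of order $\ell$, since every edge of $G$ lies in a single face and hence in one of the two disks. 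Conversely, using $2$-edge connectedness, every separation of minimum order of $G$ is realized by such a noose, and hence by a cycle of $R_G$. This gives a correspondence between cycles of length $<2k$ in $R_G$ and tight separations of order $<k$ in $G$.

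For the direction \emph{tangle $\Rightarrow$ slope}, given a tangle $\mathcal{T}$ of order $k$ in $G$ and a cycle $C$ of $R_G$ of length $2\ell<2k$, exactly one of the two associated separations lies in $\mathcal{T}$, say $(A_{1},A_{2})$; I define $\mathsf{ins}(C)\coloneqq D_{2}$, i.e., the closed disk opposite to the ``big'' side chosen by $\mathcal{T}$. For S1, if $C\subseteq \mathsf{ins}(C')$ then the separation associated to $C$ is dominated by the one associated to $C'$; the standard consequence of \eqref{tangleax} that a tangle is down-closed on small sides then forces $\mathsf{ins}(C)\subseteq \mathsf{ins}(C')$. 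For S2, three paths $P_{1},P_{2},P_{3}$ in $G$ between $x$ and $y$ whose pairwise unions are cycles of length $<k$ correspond, in $R_G$, to three cycles of length $<2k$; if their inside disks covered $\Sbbb^{2}$, then the three ``big'' sides chosen by $\mathcal{T}$ would together cover $V(G)$, directly contradicting~\eqref{tangleax}.

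For the direction \emph{slope $\Rightarrow$ tangle}, given $(A,B)\in\mathcal{S}_{k}(G)$ I would first replace it by a tight separation of the same type (minimum-order separating $A\setminus B$ from $B\setminus A$), which by the correspondence above is realized by some cycle $C$ of $R_G$ of length $<2k$. I orient $(A,B)$ into $\mathcal{T}$ according to which closed disk of $C$ the slope picks, placing the ``big'' side on the side opposite to $\mathsf{ins}(C)$. Axiom S1 guarantees this is independent of the chosen tight representative and yields a consistent orientation of $\mathcal{S}_{k}(G)$. The tangle axiom~\eqref{tangleax} is then dual to S2: three separations in $\mathcal{T}$ whose ``big'' sides cover $V(G)$ would yield, after going to tight representatives sharing common vertices, three cycles of $R_G$ arising from three paths joining a common pair of vertices whose inside disks cover $\Sbbb^{2}$, contradicting S2.

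The main obstacle is precisely the step of replacing an arbitrary small-order separation of $G$ by a \emph{tight} one that is realized by a noose, and verifying that the orientation produced by the slope does not depend on the chosen tight representative. This is where one has to use $2$-edge connectedness of $G$ (ensuring that $R_{G}$ is simple and that minimum vertex separators correspond to nooses), together with the monotonicity axiom S1 of the slope and the symmetric down-closure property of tangles derived from~\eqref{tangleax}. Once this technical step is in place, both translations are routine and axiom-by-axiom checks give the equivalence.
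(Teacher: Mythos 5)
This proposition is not proved in the paper: it is quoted directly from Seymour and Thomas~\cite{SeymourT94callr}, so there is no in-paper argument to compare against and I can only assess your sketch on its own.

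Your high-level translation is the right one and matches the classical one: cycles of the (bipartite) radial graph are nooses of $(\Gamma,U)$, a noose of length $2\ell$ meets $\ell$ vertices of $G$ and induces a separation of order $\ell$, and under this dictionary axiom~S2 mirrors the tangle axiom~\eqref{tangleax} while S1 mirrors monotonicity of tangle orientations. The ``tangle $\Rightarrow$ slope'' direction you outline is essentially correct modulo bookkeeping (note a small-side/big-side slip: with the paper's convention, $(A_1,A_2)\in\mathcal{T}$ has $A_1$ as the ``small'' side since the axiom is $A_1\cup A_2\cup A_3\neq V(G)$, so $\mathsf{ins}(C)$ should be the disk containing $A_1$, not the one ``opposite the big side''; compare the orientation used in the paper's own proof of \cref{wthiskslopes}).

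The genuine gap is in ``slope $\Rightarrow$ tangle''. You propose to replace an arbitrary $(A,B)\in\mathcal{S}_k(G)$ by a ``tight representative realized by a noose'' and then orient it by where the slope puts that one noose. But a separation of order $<k$ in a $2$-edge-connected plane graph is in general \emph{not} realized by a single noose: once $A\cap B$ is removed, both $A\setminus B$ and $B\setminus A$ may split into several components, each bounded by its own noose through a subset of $A\cap B$, and these nooses need not be nested. The actual content of the Seymour--Thomas theorem is precisely to show how the slope's choices on this whole family of nooses (and on all cycles of $R_G$ of length $<2k$, not just those arising from minimum separators) combine into a single consistent orientation of $\mathcal{S}_k(G)$, and to verify well-definedness across different noose families representing the same separation. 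You have flagged ``this is the main obstacle'' but then asserted the rest is ``routine'', which underestimates it: without this, the orientation you define is neither total nor demonstrably independent of the chosen representative, and the reduction of axiom~\eqref{tangleax} to S2 does not go through, because the three offending separations need not come from three cycles sharing a common pair $x,y$ as S2 requires.
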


At this point we wish to stress that, given an almost spherical embedding $(G',\Gamma ,U,\Bcal,\Pcal)$ of a graph $G$, an argument similar to the one in the last paragraph of the proof of \cref{ope_tyop} already implies a ``multiplicative drop'' from the branchwidth of $G$ to the branchwidth of $G'$ and this is  certainly possible even if $G'$ is embedded to some surface of higher genus. 
However, what we pursue in this paper is  an ``additive drop'' for the case where $G'$ is planar.
As we see, this is a very particular property of branchwidth that is based on the exact min-max duality of \cref{llkseferatcatc}.
This is because the tangle-duality is tight only for branchwidth and because,  in planar graphs, for every minimal {separator $S$} of a spherically embedded graph  $G,$ there is a noose $N$ where $V_{G}(N)=S$ (see \cite{SeymourT94callr}).

\begin{theorem}
\label{wthiskslopes}
Let $G$ be a graph and let~$(G',\Gamma ,U,\Bcal,\Pcal)$ be an almost spherical embedding  of~$G$ of breadth $b$ and width at most $w$.
Assume also that $G'$ is 2-edge connected. Then $\bw(G)-\bw(G')\leq 2b\cdot w$.
\end{theorem}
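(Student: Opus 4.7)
The approach is to pass through the tangle--slope duality. By \cref{minmax_tangle}, fix a tangle $\mathcal{T}$ of $G$ of order $k\coloneqq\bw(G)$; since $G'$ is $2$-edge connected and spherically embedded via $(\Gamma,U)$, \cref{llkseferatcatc} applies to $G'$, so it suffices to construct a slope of $R_{G'}$ of order at least $k-2b^{2}w$. This, together with \cref{minmax_tangle} applied to $G'$, will yield $\bw(G')\geq \bw(G)-2b^{2}w$.

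The slope will be defined as follows. Let $C$ be a cycle of $R_{G'}$ of length less than $2(k-2b^{2}w)$; $C$ corresponds to a noose $N_{C}$ of $(\Gamma,U)$ passing through a set $V_{C}\subseteq V(G')$ with $|V_{C}|<k-2b^{2}w$, and dividing $\Sbbb^{2}$ into two closed disks $\Delta_{1},\Delta_{2}$. I will lift $N_{C}$ to a separation $(A_{C},B_{C})\in\mathcal{S}_{k}(G)$ as follows: vertices of $V(G')\setminus V_{C}$ are placed on the side of $N_{C}$ on which they lie; for each vortex $B_{i}$ with boundary noose $N_{i}$, if $\bar{\Delta}_{i}$ is disjoint from $N_{C}$, then $B_{i}$ is placed entirely on the side containing $\bar{\Delta}_{i}$, while if $N_{C}$ crosses $N_{i}$ then at each crossing one makes a corresponding cut in the path decomposition $(P_{i},\beta_{i})$ and adds the associated bag (of size $\leq w$, by the width assumption) to the separator. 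After a topological rerouting of $N_{C}$ through the empty vortex disks $\Delta_{i}$ (which, by definition of almost spherical embedding, contain no points or arcs of $(\Gamma,U)$), one can arrange that each $N_{i}$ is crossed by $N_{C}$ in at most $2b$ transverse points. This yields at most $2b\cdot w$ extra separator vertices per vortex, and summed over the $b$ vortices gives at most $2b^{2}w$. Therefore $|A_{C}\cap B_{C}|<|V_{C}|+2b^{2}w<k$, so $(A_{C},B_{C})$ is oriented by $\mathcal{T}$, and I set $\text{ins}(C)$ to be the closed disk of $\Sbbb^{2}$ bounded by $N_{C}$ corresponding to that orientation.

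It remains to verify axioms S1 and S2 of \cref{def_slopes}: both are straightforward consequences of the tangle axiom (\ref{tangleax}) applied to the lifted separations $(A_{C},B_{C})$, once one observes that topological nesting $C\subseteq \text{ins}(C')$ (respectively, the three-cycle configuration in S2) translates to the corresponding inclusion (respectively, covering condition) for the lifted separations in $G$. The principal obstacle, in my view, is the topological rerouting step: one must modify $N_{C}$ inside each empty disk $\Delta_{i}$ in a way that (a) remains a valid noose of $(\Gamma,U)$ corresponding to a cycle of $R_{G'}$, (b) guarantees the per-vortex crossing bound of $2b$ that produces the clean additive constant $2b^{2}w$, and (c) is made coherently across different cycles so that the nesting relations used in S1 and S2 remain intact. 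The width $w$ enters through the adhesion of each $(P_{i},\beta_{i})$, which is precisely the price paid per cut, while one power of $b$ comes from the breadth and the other from bounding the number of essential crossings between $N_{C}$ and a single $N_{i}$ in the presence of the other $b-1$ vortex disks.
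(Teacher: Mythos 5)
Your overall strategy matches the paper's: fix a tangle $\mathcal{T}$ of $G$ of order $k=\bw(G)$ via \cref{minmax_tangle}, lift each short cycle $C$ of the radial graph $R_{G'}$ to a separation of $G$ of order $<k$, orient it by $\mathcal{T}$, and read off a slope of order $k-2b^2w$ via \cref{llkseferatcatc}. The lifting step is where you depart from the paper, and it is exactly where the gap sits. The obstacle you yourself flag --- the ``topological rerouting'' of $N_C$ through the empty vortex disks and its coherence across cycles --- is real, unresolved, and in fact unnecessary: the paper never reroutes anything. Given $C$ and the induced $G'$-separation $(A_C,B_C)$, the paper simply sets $\tilde A_C := \bigcup_{v\in A_C} V_v$ and $\tilde B_C := \bigcup_{v\in B_C} V_v$, using the vortex-closure sets $V_v$ from \eqref{dfkol_dm}. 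Since every vortex edge lies inside a single bag $\beta_i(p^i_j)$ and hence inside $V_{v^i_j}$, the pair $(\tilde A_C,\tilde B_C)$ is automatically a separation of $G$; no cut in any path decomposition and no modification of $N_C$ is ever performed. Crucially, this makes the lift a \emph{function} of $C$ alone, which is what lets axioms S1--S3 reduce directly to the tangle condition \eqref{tangleax}. Your scheme, by contrast, requires a choice of reroute at each cycle, and the consistency of these choices for nested cycles (S1) and for triples of paths (S2) is precisely your point (c), which you leave open.

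The order bound is also obtained differently, and your count does not cleanly support the constant. You bound the crossings of $N_C$ with a single $N_i$ by $2b$ and then pay $w$ per crossing, so you would need a proof that the rerouted curve really crosses each $N_i$ at most $2b$ times and still corresponds to a cycle of $R_{G'}$ (a radial curve traverses faces and vertices, not arbitrary free regions). The paper avoids this entirely: since $\Delta_i$ contains no point or arc of $(\Gamma,U)$ and is connected, it lies inside a \emph{single} face of $G'$; a cycle of $R_{G'}$ visits each face-vertex at most once, hence passes through each face in at most one arc with two endpoints; so $C$ meets at most two vertices of $V_{G'}(N_i)$ per vortex, at most $2b$ rich vertices in total, each of which inflates the separator by at most $1+b(w+1)$. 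That yields $|\tilde A_C\cap\tilde B_C| < k'+2b^2w = k$ without any rerouting. You should replace the rerouting construction with the canonical $V_v$-lift and the single-face observation; as written, the plan stalls at your own item (c).
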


\begin{proof}
Let $(G',\Gamma ,U,\Bcal,\Pcal)$ be an almost spherical embedding  of~$G$ as in \cref{defonfoe}.
As $G'$ is $2$-connected it contains a cycle, therefore $\bw(G)\geq \bw(G')\geq 2$.  
Let $R_{G'}$ be the {radial graph} of $(\Gamma, U)$ and let $(\Gamma _R,U_R)$ be a spherical embedding of $R_{G'}$.
Because of \cref{llkseferatcatc} and \cref{minmax_tangle}, it is enough to prove that 
if $G$ has a tangle of order $k\coloneqq\bw(G),$  then $(\Gamma _R,U_R)$ has a slope of order $k'\coloneqq k-2b\cdot w$.

The $2$-connectivity of $G'$ implies that all faces of $(\Gamma _R,U_R)$ are incident to exactly four vertices and they are bijectively mapped to the edges of $G'$.
Notice that every cycle $C$ of $R_{G'}$ can be seen as a noose of $(\Gamma, U)$ where $V(C)$ contains exactly $|V(C)|/2$ vertices from $G'$.
We denote these vertices by $V_{C}$. 

For every cycle $C$ of $R_{G'}$ of length $<2k'$, we consider the separation $(A_{C},B_{C})$ of $G'$ where $A_{C}$ contains the \rred{$<k'$} vertices \rred{of $G'$} inside one of the closed disks of $\Sbbb^2$ bounded by $C$ in  $(\Gamma, U)$ and $B_{C}$ contains the \rred{$<k'$} vertices   \rred{of $G'$}  that are inside the other \rred{(here we use the fact  that in each cycle of $R_{G'}$ the vertices of $G'$ and the vertices corresponding to faces of $G'$ are alternating, therefore they are of equal size)}.
Clearly $V_{C}=A_{C}\cap B_{V},$ therefore both $(A_{C},B_{C})$ and $(B_{C},A_{C})$ are {separations} \rred{of $G'$} of order $<k'$.
\revone{16. Page 13, line 21: Please state clearly in which graph(s) you consider the separations, and why the parameter halves (from $2k'$ to $k'$).}
\ans{We now provide explanation based on the alternation of vertices and faces in the cycles of the radial graph.}

Given a cycle $C$ of $R_{G'}$ of length $<2k'$ and the separation $(A_{C},B_{C})$ we define $\tilde{A}_{C}=\bigcup_{v\in A_{C}}V_{v}$ and $\tilde{B}_{C}=\bigcup_{v\in B_{C}}V_{v}$ (see \eqref{dfkol_dm} for the definition of $V_{v}$).
As, in the spherical embedding $(\Gamma, U)$ of $G',$ the curve $C$ meets each of the faces of $G'$ once, we have that, for every $i\in[r]$, no more than two vertices of $V(N_i)$ are met by $C$.
Therefore, $V_{C}$ contains at most $2b$ rich vertices and this in turn implies that   
$\tilde{A}_{C}\cap \tilde{B}_{C}$ contains at most $|V_{C}|-2b+2b\cdot (w+1)<k'+2b\cdot w$ vertices.
We conclude that both $(\tilde{A}_{C}, \tilde{B}_{C})$ and $(\tilde{B}_{C}, \tilde{A}_{C})$ are separations of $G$ of order $<k'+2b\cdot w=k$.

Let $\mathcal{T}$ be a tangle of $G$ of order $k$.
We use $\mathcal{T}$ in to build a slope $\mathsf{ins}$ of $G'$ of order $k$ as follows.
Let $C$ be a cycle of $R_{G'}$ of length $<2k'$.
If $(\tilde{A}_{C}, \tilde{B}_{C})\in \mathcal{T}$ then we set $\mathsf{ins}(C)$ to be the closed disk bounded by $C$ that contains $A_{C}$ and if $(\tilde{B}_{C}, \tilde{A}_{C})\in \mathcal{T},$ then  $\mathsf{ins}(C)$ is the closed disk bounded by $C$ that contains $B_{C}$.

We claim that $\mathsf{ins}$ is a uniform slope of $R_{G'}$ of order $k'$. For this we have to prove that S1, S2, and S3 are satisfied.
For this we give first some definitions. Given a cycle $C$ of length $<k$ of $R_{G'},$ we define $A_{C}=\mathsf{ins}(C)\cap V(G'),$ $B_{C}=\overline{\Sbbb_{0} \setminus \mathsf{ins}(C)}\cap V(G')$ and observe that $(A_{C},B_{C})$ is a separation of $G'$ of order $<k'$ which corresponds to the separation $(\tilde{A}_{C},\tilde{B}_{C})$ of  order $<k$ in $\mathcal{T}$.\medskip

For S1, assume to the contrary that $C$ and $C'$ are cycles of $R_{G'}$ of length $<k$
where $C\subseteq \mathsf{ins}(C')$ and $\mathsf{ins}(C)\subseteq \overline{\Sbbb_{0} \setminus \mathsf{ins}(C')}$.
This implies that $\mathsf{ins}(C)\cup\mathsf{ins}(C')=\Sbbb_{0} $.
This implies that $G'[A_{C}]\cup G'[A_{C'}]=G'$ which in turn implies that  $G[\tilde{A}_{C}]\cup G[\tilde{A}_{C'}]=G$.
But then, then the triple $(\tilde{A}_{C},\tilde{B}_{C}), (\tilde{A}_{C},\tilde{B}_{C}), (\tilde{A}_{C'},\tilde{B}_{C'})$ violates condition~\eqref{tangleax} for $\mathcal{T}$.\medskip
 
For S2, assume to the contrary that $P_1,P_2,P_3$ are three paths of $R_{G'}$ joining two distinct vertices $x$ and $y$ and such that the three cycles $C_{1,2}\coloneqq P_{1}\cup P_{2},$ $C_{2,3}\coloneqq P_{2}\cup P_{3},$ and $C_{3,1}\coloneqq P_{3}\cup P_{1}$ have length $<k,$ and  $\mathsf{ins}(C_{1,2})\cup\mathsf{ins}(C_{2,3})\cup\mathsf{ins}(C_{3,1})= \Sbbb_{0} $.
This implies that $G'[A_{C_{1,2}}]\cup G'[A_{C_{2,3}}]\cup G'[A_{C_{3,1}}]=G',$ therefore $G[\tilde{A}_{C_{1,2}}]\cup G[\tilde{A}_{C_{2,3}}]\cup G[\tilde{A}_{C_{3,1}}]=G$.
But then the triple $(\tilde{A}_{C_{1,2}},\tilde{B}_{C_{1,2}}), (\tilde{A}_{C_{2,3}},\tilde{B}_{C_{2,3}}), (\tilde{A}_{C_{3,1}},\tilde{B}_{C_{3,1}})$ violates  condition~\eqref{tangleax} for $\mathcal{T}$.\medskip

For S3, assume to the contrary that  $\mathsf{ins}$ is not uniform, i.e.,  there is a (square) face $f$ of $R_{G'}$  that is not a subset of $\mathsf{ins}(C)$ for some cycle of $R_{G}$ of length $<k$.
Then this is also the case for the cycle $C_{f}$ of length $4$ corresponding to the boundary of $f$.
This implies that $\mathsf{ins}(C_f)=\Sbbb_{0}\setminus f,$ which in turn implies that $A_{C_{f}}=V(G')$ and therefore $\tilde{A}_{C_{f}}=V(G)$.
But then the triple  $(\tilde{A}_{C_f},\tilde{B}_{C_{f}}),$  $(\tilde{A}_{C_f},\tilde{B}_{C_{f}}),$  $(\tilde{A}_{C_f},\tilde{B}_{C_{f}})$ violates condition~\eqref{tangleax} for $\mathcal{T}$.
\end{proof}

\rred{
Given a graph $G$ and an edge set $E\subseteq E(G)$,
we denote by $G+E$ the graph obtained by $G$ if we add all the edges in $E$ in $G$ by
agreeing that for every edge $e\in E(G)\cap E$,  the multiplicity  of $e$ in $G+E$ is increased by one.
 }

\rred{
\begin{lemma}
\label{frt6y}
Let $(G',\Gamma ,U,\Bcal,\Pcal)$  be an almost spherical embedding of a graph $G$, with $\bw(G')\geq 2$.
There is a set $E\subseteq {V(G')\choose 2}$ such that, if $G^{\mathsf{new}}\coloneqq G+E$ and  $G^{\prime,\mathsf{new}}\coloneqq G'+E$   then $G^{\prime,\mathsf{new}}$ is $2$-edge-connected and $\bw(G^{\prime,\mathsf{new}})=\bw(G')$.
Moreover, ${G}^{\mathsf{new}}$ has a an almost spherical embedding $({G}^{\prime,\mathsf{new}},\Gamma ^{\mathsf{new}},U,\Bcal,\Pcal)$.
\end{lemma}}%
\revone{17. Page 14, line 1+: I suggest to break the long sentence into a more understandable description.}
\ans{Done. We added the definition of $G+E$ before the statement of the lemma.}

\begin{proof}
Let $b\coloneqq |\Bcal|$.
If $G'$ contains some bridge $e$, then we duplicate it and do the same in $G$. Notice that 
the new edge can be drawn as a new arc in the current $\Gamma $ \change{so to avoid}{in a way that avoids}%
\revthree{19.  page 14, line 9: ``so to avoid'' looks like a typo.}
\ans{Fixed.}
 the disks of the vortices of $\Bcal$. 
%\rred{Clearly, this is possible because no vertex or edge of $G'$ is drawn in the disk of a vortex.}
 {This makes all connected components}%
\revone{18. Page 14, line 10+: I find the beginning of the proof overcomplicated.} 
\ans{The complication comes because we wish to stress that the new edges should not be drawn in the vortices.}
of $G'$
$2$-edge-connected.
Moreover, as the branchwidth of $G'$ is at least two, this duplication \change{maintains our branchwidth invariant}{does not change the branchwidth of $G$ or $G'$}.
We also update $(\Gamma, U)$ accordingly. 

Our next step is to make $G'$ connected.
For this consider two connected components $C_{1}$ and $C_{2}$ of $G'$ and observe that $\Sbbb_{0}\setminus\bigcup_{i\in[b]}\Delta_{i}$ contains an open disk $\Delta$  whose closure contains a point of $U$ corresponding to a vertex $v_{1}$ of $C_{1}$ and a point of $U$ corresponding to a vertex $v_{2}$ of $C_{2}$ (recall that $\Delta_{1},\ldots,\Delta_{b}$ are the disks of the vortices in $\Bcal$).
Then add to $G'$ (and also to $G$) the double edge $v_1v_2$ and update $(\Gamma, U)$ by drawing the corresponding two new arcs inside $\Delta$ so that their endpoints are 
the points of $U$ corresponding to $v_{1}$ and $v_{2}$.
Using \cref{alltog}, for $r=1$, it  follows that the addition of this double edge \change{keeps the branchwidth  invariant}{does not change the branchwidth}%
\revone{19. Page 14, line 22: ``keeps the branchwidth invariant'' ? Do you simply mean that the $\bw$. does not change?}
\ans{Indeed. Fixed.}
 of $G'$.
Notice that the new $G'$ has less connected components than the old one. 

By adding double edges in $G'$ and $G$ as above, until $G'$ becomes connected 
(and therefore also $2$-edge-connected), we end up with two graphs \rred{that we denote by} $G^{\mathsf{new}}$ and $G^{\prime,\mathsf{new}}$ \rred{respectively} and an almost spherical embedding {$({G}_{t}^{\prime,\mathsf{new}},\Gamma ^{\mathsf{new}},U,\Bcal,\Pcal)$ of ${G}_{t}^{\mathsf{new}},$}%
\revthree{20. page 14, line 44: ``$^{\mathsf{new}}$'' is not an operator, so $\overline{G}^{\mathsf{new}}$ should be defined.}
\ans{Fixed.}
 as required.
\end{proof}

\begin{proof}[Proof of \cref{maintscomb}]
We consider the decomposition $(T,\beta)$ of \cref{from_surfex} and let $q\coloneqq f_{\ref{from_surfex}}(k)$.

Let $t\in V(T)$.
If $|V(G_{t})|\leq q$, we set $X_{t}=V(G_{t})$ and conditions i)--iv) hold trivially as $G_{t}-X_{t}$ is the empty graph.\medskip

Let now $|V(G_{t})>q$ and let $(G_t',\Gamma _t,U_{t},\Bcal_t,\Pcal_t)$ be an almost spherical embedding of $\overline{G}_t\coloneqq G_t-A_{t}$ of width $\leq q$ and breadth $\leq q$ and for some $A_{t}\subseteq V(G_{t})$ where $|A_{t}|\leq q$.

Let $Z_{t}=(\cupall \Bcal_t)\setminus V(G'_{t})$ and $X_{t}=A_{t}\cup Z_{t}$, i.e.,
$X_{t}$ contains all apices and all the internal vertices of  the vortices in $\Bcal_t$. 
Clearly, $G_{t}-X_{t}=\overline{G}_{t}-Z_{i}=G'_{t}$.
Condition i) follows as $G'_{t}$ has a spherical embedding. 
Conditions ii) and iii) follow from B1 and B2 respectively.

For Condition iv),  recall first that B3 permits us 
to assume that $\bw(G'_{t})\geq 2$ (\cref{remksiop}). 
Therefore we may apply \cref{frt6y}
and obtain a $2$-edge-connected graph $G_t^{\prime,\mathsf{new}}$ where $\bw(G_{t}')=\bw({G}_{t}^{\prime,\mathsf{new}})$ and an almost spherical embedding $({G}_{t}^{\prime,\mathsf{new}},\Gamma _t^{\mathsf{new}},U_{t},\Bcal_t,\Pcal_t)$ of $\overline{G}_{t}^{\mathsf{new}}$.  As $\overline{G}_{t}$ is a spanning subgraph of 
$\overline{G}_{t}^{\mathsf{new}}$, we also have that $\bw(\overline{G}_{t})\leq \bw(\overline{G}_{t}^{\mathsf{new}})$.

We may now apply \cref{wthiskslopes} on  $({G}_{t}^{\prime,\mathsf{new}},\Gamma _t^{\mathsf{new}},U_{t},\Bcal_t,\Pcal_t)$ 
and derive that $\bw(\overline{G}_t^{\mathsf{new}})\leq 2q^3+\bw({G}_{t}^{\prime,\mathsf{new}})$. This implies that 
$\bw(\overline{G}_t)\leq 2q^3+\bw({G}_{t}^{\prime})$.
Therefore, we obtain $\bw(G_{t})  \leq  |A_{t}|+
\bw(\overline{G}_t)\leq q+2q^3+\bw({G}_{t}^{\prime})$ from \cref{apex_bw}, and thus, iv) also holds by setting $f_{\ref{maintscomb}}(k)\coloneqq q+2q^3$.
\end{proof}

\section{Conclusion}
\label{opne_all}

In this paper we show that for classes of graphs not containing as a subgraph a handle wall or a crosscap wall, there exists an constant-additive approximation algorithm  for branchwidth (\cref{mainth}).
Our algorithm is based on the algorithms in \cref{pr_bodl_linear} and \cref{pr_rat_k}
that can also {construct}%
\revone{20. Page 14, line 58: I do not find this formulation correct, since your wording of \cref{pr_bodl_linear} and \cref{pr_bodl_linear} does not include a construction. Can you be more precise and cite your sources? Or, cannot you use a self-reduction approach to construct a decomposition?} 
\ans{The result of Bodlaender in \cite{BodlaenderT97const} can also construct the tree decomposition. This is visible in the full version of the paper, \cite{BodlaenderT97const}, that we now site.}
the branch-decompositions in question 

\change{(see \mbox{\cite{GuT08optim}} for the constructive version of  \mbox{\cref{pr_rat_k}}}{See Bodlaender and Thilikos \cite{BodlaenderT97const} as well as Gu and Tamaki \cite{GuT08optim} for the constructive versions of 
\cref{pr_bodl_linear} and \cref{pr_bodl_linear} respectively.}

This implies that our algorithm can also be enhanced to output the corresponding branch decomposition.
Another issue is whether the cubic running time can be improved. This 
depends on two factors. The first is the cubic time required to construct the decomposition in \cref{from_surfex}.
This running time might be improved to a quadratic one using the technique of  Grohe, Kawarabayashi, and  Reed in \cite{GroheKR13}.
The second is the ``Ratcatcher'' algorithm in \cref{pr_rat_k}
that requires cubic time and, to our knowledge, no faster algorithm exists for 
computing the branchwidth of a planar graph.                  
\smallskip

As we already mentioned, the exact polynomial computation of branchwidth is 
known to be possible for graph classes excluding a singly-crossing graph as a minor \cite{DemaineHNRT04appro}.
As singly-crossing graphs are minors of both the handle wall and the 
crosscap wall, our result can be seen as an ``approximation'' extension of the results in \cite{DemaineHNRT04appro}. However the question on whether (and when) an exact polynomial algorithm exists remains open. Our guess is the following.

\begin{conjecture}
If $H$ is a graph that can be embedded both in the torus and in the projective plane, then branchwidth can be computed in polynomial time in $H$-minor free graphs.
\end{conjecture}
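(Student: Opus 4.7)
The plan is to close the additive gap in \cref{arst_cpe} by refining both the structural decomposition and the algorithmic engine. First, I would strengthen \cref{w1rgol}/\cref{maintscomb} so that, instead of discarding the apex sets and vortex interiors to obtain a decomposition of a subgraph $G'$, we retain a tree decomposition of $G$ itself whose torsos are almost-spherically-embeddable in the sense of \cref{defonfoe} with bounded apex, bounded breadth, and bounded vortex width, and whose adhesion sets are cliques of size at most three. The key structural input is that whenever $H$ embeds in both the torus and the projective plane, any almost-embedding of a piece in a surface of higher Euler genus would contain $H$ as a minor (since $H$ would embed in the host surface and any sufficiently large grid-like portion realizes it); hence the Graph Minors Structure Theorem forces the torsos to be almost-spherically embeddable. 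This is the analogue of Wagner's description that already underlies \cref{w1rgol}, but kept with the vortices and apices present.

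Second, I would extend the Ratcatcher (\cref{rat_k}) into an exact subroutine for these almost-planar torsos. Concretely, for each torso $G_t$ with apex set $A_t$ and vortices $B_1,\dots,B_b$, I would enumerate every possible ``behavior'' of an optimal tangle on the constant-size sets $A_t$, on the boundary nooses $N_i$ of the vortices, and on each adhesion clique. For each guessed behavior, the problem reduces to computing branchwidth of a planar hypergraph obtained by (i) contracting each vortex to a bounded-width gadget consistent with the guessed boundary tangle and (ii) attaching phantom hyperedges that encode the tangle's orientation on the apices and adhesion cliques. Since the number of guesses is bounded by a function of $k$, and the Ratcatcher handles planar hypergraphs in cubic time, the per-torso computation is polynomial.

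Third, I would glue the torsos by dynamic programming on $T$. At each node $t$ the DP stores a table indexed by ``tangle traces'' on the (at most 3-vertex) adhesion sets connecting $t$ to its neighbors; a trace records how an optimal tangle of the full graph restricts to separations across each adhesion clique. Using the characterization in \cref{llkseferatcatc} and \cref{minmax_tangle} (and the tangle-lifting argument already present in the proof of \cref{alltog}), compatible local tangles on adjacent torsos combine into a global tangle, so the DP value at the root equals $\bw(G)$ exactly. Because adhesion is bounded by $3$, the table has constant size, and the DP runs in polynomial time.

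The hard part, and the true heart of the conjecture, is closing the loss of $2b^2w$ incurred in \cref{wthiskslopes}. That estimate is an artifact of replacing boundary vertices of a vortex by the full contents of their bags in the path decomposition; it does not reflect a genuine branchwidth gap. What is needed is an exact min-max duality for almost-spherically-embedded graphs, relating tangles of the whole graph to uniform slopes in the radial graph of $G'$ \emph{enriched by the internal vortex structure}. I expect this to be established by designing, for each vortex $B_i$, a small collection of ``vortex noose families'' — essentially a finite set of canonical ways a tangle can cross $N_i$, parametrized by the bags of $\Pcal$ — and proving that slopes respecting these families are in bijective correspondence with tangles of $G$. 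If such a tight duality is achieved, it would combine with the enumeration above to yield the conjectured exact polynomial-time algorithm; without it, only the additive approximation of \cref{arst_cpe} is obtained.
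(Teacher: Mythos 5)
This statement is a \emph{conjecture}, explicitly left open in the paper ("Our guess is the following"); there is no proof in the paper to compare against, so your proposal cannot be matched to one. Your outline is a reasonable roadmap for attacking it, and you deserve credit for being honest about where it is incomplete: the closing paragraph correctly identifies that the additive loss $2b^2w$ in \cref{wthiskslopes} is the crux, and that without an exact tangle--slope duality for almost-spherically-embedded graphs your plan only reproduces the additive approximation of \cref{arst_cpe}, not an exact algorithm.

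That said, several of the intermediate steps also have gaps that are glossed over. First, the assertion that the $2b^2w$ term is merely ``an artifact'' of inflating noose vertices by their bags is not established; it is a hope, and closing it would require relating tangles of $G$ to some enriched slope-like object on the vortex-decorated planar skeleton, a duality that no one has proved. Second, the Ratcatcher subroutine: \cref{rat_k} is stated for planar graphs (the Seymour--Thomas algorithm does work for planar hypergraphs), but replacing each vortex by a ``bounded-width gadget consistent with the guessed boundary tangle'' is not a well-defined branchwidth-preserving reduction; a vortex of width $w$ can carry $\Omega(n)$ nested separations, and the restriction of a tangle of $G$ to these separations is not captured by a constant-size or even polynomial-size guess over the boundary alone without the very duality you admit is missing. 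Third, the tree DP over ``tangle traces on adhesion cliques'' presupposes that you can certify local tangles as restrictions of a global one; \cref{alltog} gives one direction (a global tangle induces a local one at the sink torso), but the converse direction needed to glue local solutions is exactly where the argument in \cite{DemaineHNRT04appro} (for single-crossing minors) uses strong planarity of the pieces, and it is not clear it extends to almost-planar torsos with vortices. So as a proof this does not go through; as a statement of where the difficulty lies, it agrees with the paper's framing that the conjecture hinges on improving the tangle--slope machinery of \cref{this_che_slopes} to an exact min--max duality in the presence of vortices.
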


Notice that singly-crossing graphs are minors of both handle walls or a crosscap walls. 
However, the \change{inverse}{converse}%
\revone{21. Page 15, line 22: inverse ~$\longrightarrow$~ converse}
\ans{Fixed.}
 is not the case.
For instance, $K_{6}$ can be embedded both in the torus and in the projective plane, but it is not a singly-crossing graph.
So a first step towards pursuing a proof of the above conjecture would be to look for a polynomial time algorithm for branchwidth on $K_{6}$-minor free graphs or proving that this is not possible subject to some complexity theory assumption.

%\bibliographystyle{plainurl}
%\bibliography{literature_eptasbw}

\end{document}